\newtheorem{prop}{Proposition}[section]
\newtheorem{LM}{Lemma}[section]
\newtheorem{thm}{Theorem}[section]
\newtheorem{df}{Definition}[section]
\newtheorem{df-prop}{Definition-Proposition}[section]
\newtheorem{cor}{Corollary}[section]
\newtheorem*{thm*}{Theorem \ref{distgen}}
\newtheorem*{thm**}{Theorem \ref{eqasai}}
\newtheorem*{thme**}{Theorem \ref{asaiprinc}}
\newtheoremstyle{pourlesremarques}{\topsep}{\topsep}{\normalfont}{}{\bfseries}{.}{ }{}
\theoremstyle{pourlesremarques}
\newtheorem{rem}{Remark}[section]
\def\adots{\mathinner{\mkern2mu\raise 1pt\hbox{.}\mkern 3mu\raise
4pt\hbox{.}\mkern1mu\raise 7pt\hbox{{.}}}}
\title {Distinction and Asai $L$-functions for generic representations of general linear groups over p-adic fields}
\author{Nadir Matringe}
\date{}
\begin{document}

 \maketitle

\begin{abstract}
Let $K/F$ be a quadratic extension of $p$-adic fields, and $n$ a positive integer. A smooth irreducible representation of the group $GL(n,K)$ is said to be distinguished, 
if it admits on its space a nonzero $GL(n,F)$-invariant linear form. In the present work, we classify distinguished generic representations of the group $GL(n,K)$ in terms of inducing 
quasi-square-integrable representations. This has as a consequence the truth of the expected equality between the Rankin-Selberg type Asai $L$-function of a generic 
representation, and the Asai $L$-function of its Langlands parameter.
\end{abstract}

\section*{Introduction}

Given $K/F$ a quadratic extension of $p$-adic fields, we denote by $\sigma$ the non trivial element of the Galois group of $K$ over $F$. We denote by $\eta_{K/F}$ 
the character of order $2$ of $F^*$, trivial on the set of norms of $K^*$.\\

 A smooth representation of $GL(n,K)$ is said to be distinguished if it admits on its space a nonzero linear form, which is invariant under $GL(n,F)$. 
The pair $(GL(n,K),GL(n,F))$ is known to be a generalized Gelfand pair, which means that for an irreducible representation $(\pi,V_{\pi})$ of $GL(n,K)$, 
the space of $GL(n,F)$-invariant linear forms on $V_{\pi}$ is of dimension at most one. The unitary distinguished representations are the natural space which supports 
the Plancherel measure of the symmetric space $GL(n,F)\backslash GL(n,K)$. Hence their understanding is related to harmonic analysis on $GL(n,F)\backslash GL(n,K)$.\\

We classify here distinguished generic representations of $GL(n,K)$, in terms of inducing discrete series representations. More precisely we prove the following result.

\begin{thm*}
Let $\pi$ be a generic representation of the group $GL(n,K)$, obtained by normalized parabolic induction of quasi-square-integrable representations 
$\Delta_1, \dots ,\Delta_t$. It is distinguished if and only if there exists a reordering of the ${\Delta _i}$'s, and an integer $r$ between $1$ and $t/2$, such that we have 
$\Delta_{i+1}^{\sigma} = \Delta_i^{\vee} $ for $i=1,3,..,2r-1$, and $\Delta_{i}$ is distinguished for $i > 2r$.
\end{thm*}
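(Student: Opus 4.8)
The plan is to prove the two implications by rather different means: sufficiency (``if'') by an explicit construction of a $GL(n,F)$-invariant form out of well chosen orbits in the Bruhat decomposition $P\backslash GL(n,K)/GL(n,F)$, and necessity (``only if'') by the geometric lemma (Mackey theory) applied to $\mathrm{Res}_{GL(n,F)}(\pi)$. In both directions one uses constantly that, $\pi$ being generic, the segments $\Delta_1,\dots,\Delta_t$ are pairwise unlinked; hence $\pi$ is irreducible, the normalized induced representation $\Delta_{w(1)}\times\cdots\times\Delta_{w(t)}$ does not depend on the permutation $w$, and every sub-product of the $\Delta_i$ is again irreducible and generic. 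This is what makes the ``reordering'' in the statement costless.

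\textbf{The ``if'' part.} Suppose the $\Delta_i$ are ordered so that $\Delta_{i+1}^{\sigma}\cong\Delta_i^{\vee}$ for $i=1,3,\dots,2r-1$ and $\Delta_i$ is distinguished for $i>2r$, and set $\rho_j=\Delta_{2j-1}\times\Delta_{2j}$, an irreducible representation of the shape $\Delta\times(\Delta^{\vee})^{\sigma}$. The argument has two steps. First, \emph{each $\rho_j$ is distinguished}: in the decomposition of $\mathrm{Res}_{GL(2m,F)}$ of $\Delta\times(\Delta^{\vee})^{\sigma}$ along the $GL(2m,F)$-orbits on the $(m,m)$-flag variety of $GL(2m,K)$, the open orbit has stabilizer $GL(m,K)$ embedded by $g\mapsto(g,\sigma(g))$, so its contribution to the space of invariant forms is $\mathrm{Hom}_{GL(m,K)}(\Delta\otimes((\Delta^{\vee})^{\sigma})^{\sigma},\mathbb{C})=\mathrm{Hom}_{GL(m,K)}(\Delta\otimes\Delta^{\vee},\mathbb{C})\neq0$; since $\Delta$ is quasi-square-integrable the corresponding open-orbit integral (absolutely convergent for this balanced datum) directly produces a nonzero invariant form on the whole space. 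Second, \emph{distinction propagates} to $\pi=\rho_1\times\cdots\times\rho_r\times\Delta_{2r+1}\times\cdots\times\Delta_t$: one looks at the closed $GL(n,F)$-orbit (the ``diagonal'' one) on the flag variety attached to the corresponding parabolic, whose term in the Mackey filtration is a \emph{quotient} of $\mathrm{Res}_{GL(n,F)}(\pi)$ with space of invariant forms $\bigotimes_j\mathrm{Hom}_{GL(2m_j,F)}(\rho_j,\mathbb{C})\otimes\bigotimes_{k>2r}\mathrm{Hom}_{GL(n_k,F)}(\Delta_k,\mathbb{C})$ once the modulus characters are accounted for (they work out harmlessly here, $[K:F]$ being $2$); this is nonzero by the first step and the hypothesis, and a nonzero form on a quotient pulls back to $\pi$.

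\textbf{The ``only if'' part.} Assume $\pi$ distinguished. By the geometric lemma, $\mathrm{Res}_{GL(n,F)}(\pi)$, with $\pi=\mathrm{Ind}_P^{GL(n,K)}(\Delta_1\otimes\cdots\otimes\Delta_t)$, carries a finite filtration indexed by the $GL(n,F)$-orbits on $P\backslash GL(n,K)$; following Blanc--Delorme these orbits are parametrized by the partial matchings of $\{1,\dots,t\}$ into couples of equal-size blocks, the term attached to a matching being induced to $GL(n,F)$ from the product of a $GL(n_i,K)$ for each couple and a $GL(n_k,F)$ for each unmatched index, twisted by an explicit modulus character. A nonzero invariant form on $\pi$ must fail to vanish on some term of the filtration, hence factors through the corresponding graded piece; by Frobenius reciprocity the matching attached to that piece must then satisfy $\Delta_j^{\sigma}\cong\Delta_i^{\vee}$ on every couple $\{i,j\}$ and $\Delta_k$ distinguished on every unmatched $k$. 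Reordering so that the couples become $(\Delta_1,\Delta_2),\dots,(\Delta_{2r-1},\Delta_{2r})$ -- legitimate by irreducibility -- then yields the statement with $r$ the number of couples. A point requiring care is that the modulus characters entering the graded pieces could a priori force these relations only up to an unramified shift; one expects them to be trivial on the orbit stabilizers (again the $[K:F]=2$ normalization), and if a residual shift survived it would be eliminated using that distinction of $\pi$ forces $\pi^{\sigma}\cong\pi^{\vee}$, so that the multisets $\{\Delta_i^{\sigma}\}$ and $\{\Delta_i^{\vee}\}$ coincide, together with the pairwise unlinkedness of the $\Delta_i$.

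\textbf{Main difficulty, and the corollary.} The two places where the real content lies are the heredity of distinction under parabolic induction -- promoting the open-orbit form on $\Delta\times(\Delta^{\vee})^{\sigma}$ to a form on the whole space and propagating distinction to the irreducible product $\pi$, where quasi-square-integrability and the control of modulus characters are essential -- and the precise determination of the $GL(n,F)$-orbits on $P\backslash GL(n,K)$ together with their stabilizers and modulus characters, which is what turns the ``only if'' argument from a heuristic into a proof. Finally, the advertised identity between the Rankin--Selberg Asai $L$-function of $\pi$ and the Asai $L$-function of its Langlands parameter follows from this classification: both sides decompose, via multiplicativity of the Asai construction in the inducing data, as products over the couples and the distinguished blocks of factors whose poles are governed by exactly the conditions above, and the quasi-square-integrable case of the identity is known.
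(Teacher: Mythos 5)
The overall skeleton you propose for the ``only if'' direction -- Mackey theory on $P\backslash GL(n,K)/GL(n,F)$, Frobenius reciprocity on the piece of the filtration that carries the invariant form, reading off conditions on the $\Delta_i$ -- is the same as the paper's. Your ``if'' direction (open-orbit construction for $\Delta\times(\Delta^{\vee})^{\sigma}$, then propagation via the closed orbit) is not what this paper does: the paper simply cites Proposition 26 of Flicker and the main result of the companion paper \texttt{[M4]} for that implication. Your sketch of it is plausible as a description of what those references contain, but it is not a self-contained proof (the absolute convergence of the open-orbit integral for $\Delta\times(\Delta^{\vee})^{\sigma}$ is real work and is precisely the content of \texttt{[M4]}).

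The genuine gap is in the ``only if'' direction, in the description of the orbits and what follows from them. You assert that the $GL(n,F)$-orbits on $P\backslash GL(n,K)$ are ``parametrized by the partial matchings of $\{1,\dots,t\}$ into couples of equal-size blocks,'' and then conclude directly that the relevant orbit gives $\Delta_j^{\sigma}\cong\Delta_i^{\vee}$ on matched pairs and $\Delta_k$ distinguished on unmatched indices. That parametrization is wrong: the double cosets are indexed by the set $I(\bar n)$ of symmetric integer matrices $(n_{i,j})$ with row sums $n_i$, i.e.\ by involutions in the Weyl group that are $W_M$-reduced on both sides, and such an involution is allowed to \emph{cut} the blocks $n_i$ into smaller pieces. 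Concretely, the nonvanishing of the graded piece attached to $s=(n_{i,j})$ forces, via the Jacquet module computation (Proposition 9.5 of Zelevinsky), only that each $\Delta_i$ decomposes as a concatenation of subsegments $[\Delta_{i,1},\dots,\Delta_{i,t}]$ of lengths $n_{i,j}$, and the resulting conditions are $\Delta_{i,i}$ distinguished and $\Delta_{j,i}\cong\Delta_{i,j}^{\vee\sigma}$ -- conditions on \emph{pieces} of segments, not on the segments themselves. Promoting these to the statement $\Delta_{i+1}^{\sigma}\cong\Delta_i^{\vee}$ and $\Delta_i$ distinguished is exactly the paper's final combinatorial lemma, a nontrivial induction on $t$ that crucially uses pairwise unlinkedness of the $\Delta_i$ and a careful ordering of the segments by length and Galois-autoduality. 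Your write-up has no counterpart to this step. Relatedly, you wave at the modulus characters (``one expects them to be trivial on the orbit stabilizers''); the paper proves exactly the needed identity $(\delta_P\delta_{P'_s})|_{M_s^{\langle\theta\rangle}}=\delta^2_{P^{\langle\theta\rangle}}|_{M_s^{\langle\theta\rangle}}$, together with the inclusion $N'_s\subset N_s^{\langle\theta\rangle}N$ which lets the invariant form descend to the normalized Jacquet module; both are needed to make the Frobenius-reciprocity step precise, and neither is a formality.
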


Distinguished representations of $GL(n,K)$ are also related by a conjecture of Flicker and Rallis, to the base change theory of representations of a unitary group 
(see for example \cite{AR}). The main result of this paper could perhaps reduce the proof of this conjecture for generic distinguished representations of $GL(n,K)$, 
to the case of discrete series representations.\\

Generic distinguished representations are related to the analytic behaviour of meromorphic functions, called Asai $L$-functions associated with generic representations.\\
The basic theory of the Asai $L$-function of a generic representation $\pi$ of $GL(n,K)$, denoted by $L_F^K(\pi)$ and defined as the gcd of functions obtained as meromorphic 
extension of Rankin-Selberg integrals, such as its functional equation, has first been developed by Flicker in \cite{F1} and \cite{F3}.\\

Then in \cite{K}, Kable proves that if the Asai $L$-function $L_F^K(\pi)$ of a discrete series representation $\pi$ admits a pole at zero, then the representation $\pi$ is 
distinguished. This, with the equality of the product of the Asai $L$-functions of $\pi$ and $\eta \pi$ (for a character $\eta$ of $K^*$ extending $\eta_{K/F}$), and of the 
classical $L$-function of the pair $(\pi^{\sigma},\pi^{\vee})$, obtained by a global-local method, allows him to prove the so called Jacquet conjecture for discrete series 
representations. This result states that a discrete series representation $\pi$ of $GL(n,K)$ which is Galois autodual (i.e. $\pi^{\vee}=\pi^\sigma$), is either 
distinguished or $\eta_{K/F}$-distinguished.\\
This result is specified in \cite{AKT}, where it is shown that the preceding (either/or) is exclusive, by first proving that the Asai $L$-function of a tempered 
distinguished representation has a pole at zero.\\

For non discrete series representation, the correct statement is that of Theorem \ref{distgen}. Actually this theorem can be seen as a generalization of Jacquet's conjecture 
for discrete series representations, 
as it says that a generic representation of $GL(n,K)$ is Galois autodual if and only if it is parabolically induced from three representations, one that is distinguished
 but not $\eta_{K/F}$-distinguished, one that is distinguished and $\eta_{K/F}$-distinguished, and one that is distinguished but not $\eta_{K/F}$-distinguished. 
Among these, the distinguished are those with purely $\eta_{K/F}$-distinguished part equal to zero.\\
The last step before Theorem \ref{distgen}, consisting of showing that the representations described in the theorem are indeed distinguished is the main result of \cite{M4}.\\

Concerning Rankin-Selberg type Asai $L$-functions, a definitive statement relating their poles and distinction is obtained in \cite{M2}, where it is proved that a representation
 $\pi$ is distinguished, if and only if its Asai $L$-function admits a so called (in the terminology of \cite{CP}) exceptional pole at zero.\\ 

There are two other ways to associate an Asai $L$-function to a representation $\pi$ of the group $GL(n,K)$.\\
 The first is by considering the $n$-dimensional representation $\rho$ of the Weil-Deligne group $W'_K$ of $K$, associated to $\pi$ by the local Langlands correspondence. 
One then defines by multiplicative induction a representation of the Weil-Deligne group $W'_F$ (which contains $W'_K$ as a subgroup of index $2$), of dimension $n^2$, 
denoted by $M_{W'_K}^{W'_F}(\rho)$. The Asai $L$-function corresponding to $\rho$ is by definition the classical $L$-function of the 
representation $M_{W'_K}^{W'_F}(\rho)$, which we denote by $L_F^K(\rho)$.\\
 The second, called the Langlands-Shahidi method, is introduced in \cite{Sh}. We denote by $L_F^{K,U}(\pi)$ the meromorphic function obtained by this process, the study of 
its poles is this time related to the fact of knowing when a representation $\pi$ is obtained by base change lift from a unitary group (see \cite{Go}).
It is conjectured that these three functions are actually the same (cf. \cite{He}, \cite{K}, \cite{AR}). Henniart proved in \cite{He} that the functions $L_F^{K,U}(\pi)$ and 
$L_F^{K}(\rho)$ are equal for every irreducible representation $\pi$. 
Anandavardhanan and Rajan proved in \cite{AR} that the functions $L_F^K(\pi)$ and $L_F^{K,U}(\pi)$ are equal for $\pi$ in the discrete series of $GL(n,K)$.\\

In \cite{M3}, which can be used as a survey for local Rankin-Selberg type Asai $L$-functions, Theorem \ref{distgen} is stated as a conjecture. It is then showed using a 
method of Cogdell and Piatetski-Shapiro and the known equality of $L_F^{K}(\pi)$ and $L_F^K(\rho)$ for discrete series representations, that the theorem implies the equality of 
$L_F^{K}(\pi)$ and $L_F^K(\rho)$ for generic representations.\\
Hence we have the following result.

\begin{thm**}
Let $\pi$ be a generic representation of the group $GL(n,K)$, and let $\rho$ be the representation of dimension $n$ of the Weil-Deligne group $W'_K$ of $K$, 
corresponding to $\pi$ through Langlands correspondence. The following equality of $L$-functions is satisfied:
$$ L_F^K(\pi,s) = L_F^K(\rho,s) .$$ 
\end{thm**}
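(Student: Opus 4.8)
The plan is to reduce the identity to the already-known equality for the discrete series, by showing that both Asai $L$-functions are multiplicative along parabolic induction in a compatible way, with the gap between the ``naive'' Rankin--Selberg contribution and the true $L$-function controlled exactly by the classification of distinguished generic representations (Theorem~\ref{distgen}). Fix a presentation of $\pi$ as the normalized parabolic induction of quasi-square-integrable representations $\Delta_1,\dots,\Delta_t$ of smaller general linear groups over $K$, write $n_i$ for the size of $\Delta_i$, and let $\rho_i$ be the $n_i$-dimensional representation of $W'_K$ attached to $\Delta_i$ by the Langlands correspondence, so that $\rho\simeq\rho_1\oplus\dots\oplus\rho_t$.

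First I would compute the Galois side explicitly. Multiplicative (tensor) induction carries a direct sum to the direct sum of the multiplicative inductions of the summands together with the ordinary inductions of the Galois-twisted cross terms:
\[
M_{W'_K}^{W'_F}(\rho)\;\simeq\;\bigoplus_{i=1}^{t}M_{W'_K}^{W'_F}(\rho_i)\;\oplus\;\bigoplus_{1\le i<j\le t}\mathrm{Ind}_{W'_K}^{W'_F}\!\bigl(\rho_i\otimes\rho_j^{\sigma}\bigr).
\]
Passing to $L$-factors and using additivity in direct sums, inductivity for the index-two subgroup $W'_K\subset W'_F$, the compatibility of the Langlands correspondence with Rankin--Selberg factors (\cite{He}), and the coincidence $L_F^K(\rho_i,s)=L_F^K(\Delta_i,s)$ for the $\Delta_i$ --- the discrete series case combining \cite{AR} and \cite{He}, extended to quasi-square-integrable representations by an unramified twist --- one obtains
\[
L_F^K(\rho,s)\;=\;\prod_{i=1}^{t}L_F^K(\Delta_i,s)\ \prod_{1\le i<j\le t}L(\Delta_i\times\Delta_j^{\sigma},s).
\]
It therefore suffices to establish the matching factorization for the Rankin--Selberg Asai $L$-function, namely $L_F^K(\pi,s)=\prod_i L_F^K(\Delta_i,s)\,\prod_{i<j}L(\Delta_i\times\Delta_j^{\sigma},s)$.

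For this I would run the Cogdell--Piatetski-Shapiro machinery (\cite{CP}, \cite{M3}). Realizing $\pi$ in its Whittaker model and expressing its Whittaker functions through those of the $\Delta_i$ via the Iwasawa decomposition, the Rankin--Selberg Asai integrals unfold into integrals attached to the $\Delta_i$, and each turns out to be a holomorphic multiple of $\prod_i L_F^K(\Delta_i,s)\prod_{i<j}L(\Delta_i\times\Delta_j^{\sigma},s)$; hence $L_F^K(\pi,s)$ divides this product. The converse divisibility requires producing, inside the gcd $L_F^K(\pi,s)$, every pole of the product. The poles that stay ``ordinary'' for $\pi$ --- the Rankin--Selberg poles among constituents in general position and the exceptional poles of the distinguished tail $\Delta_i$ with $i>2r$, in the notation of Theorem~\ref{distgen} --- are produced by the standard non-vanishing of Rankin--Selberg and Asai integrals on suitably chosen data. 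The remaining poles are the ones that get promoted to \emph{exceptional} poles of $L_F^K(\pi,s)$ (in the sense of \cite{CP}) precisely because the corresponding unramified twist of $\pi$ is distinguished --- typically the pole at $s=0$ carried by a factor $L(\Delta_i\times\Delta_j^{\sigma},s)$ with $\Delta_j^{\sigma}=\Delta_i^{\vee}$. Here one invokes the equivalence of \cite{M2} between distinction and the existence of an exceptional pole, fed by the explicit list of distinguished generic representations (and their twists) furnished by Theorem~\ref{distgen}, to certify that these exceptional poles are actually attained. Matching the resulting set of poles with those of the explicit Galois-side product then gives the asserted equality.

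The step I expect to be the main obstacle is exactly this last point: a priori the gcd defining $L_F^K(\pi,s)$ could be a proper divisor of the product, so one genuinely needs the full force of Theorem~\ref{distgen} (whose difficult implication, that the representations it describes are indeed distinguished, is the main result of \cite{M4}) together with the distinction/exceptional-pole criterion of \cite{M2} to guarantee that no expected exceptional pole is lost. Everything else amounts to bookkeeping with the multiplicativity of $L$-factors.
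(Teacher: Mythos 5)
Your proposal matches the paper's strategy: the paper deduces Theorem \ref{eqasai} by first proving Theorem \ref{distgen} and then invoking the main result of \cite{M3}, which is precisely the Cogdell--Piatetski-Shapiro unfolding argument combined with the discrete series case (\cite{AR} plus \cite{He}) that you sketch. Since the paper does not re-derive the \cite{M3} implication but simply cites it, your outline is effectively reconstructing that cited ingredient rather than taking a different route.
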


Now the main tool for the proof of Theorem \ref{distgen}, classical in this situation, is Mackey theory. For instance the case of principal series representations 
is treated in section 6 of \cite{JLR}.\\ 
 A generic representation $\pi$ of the group $GL(n,K)$, is obtained by normalized parabolic induction of a discrete series representation 
$\Delta=\Delta_1\otimes\dots\otimes\Delta_t$ of a standard Levi subgroup of a parabolic subgroup $P(K)$ of $GL(n,K)$. 
Calling $R$ a set of representatives of the double classes 
$P(K)\backslash GL(n,K)/GL(n,F)$, the restriction to $GL(n,F)$ of the representation $\pi$ has a factor series, with factors being induced representations of 
$\Delta_{|P(K)\cap uGL(n,F)u^{-1}}$ to the group $uGL(n,F)u^{-1}$, for some $u$ in $R$. If the representation $\pi$ is $GL(n,F)$-distinguished, then it is at least the case for one of the factors. 
But using Frobenius reciprocity law, such a factor is distinguished, if and only if the representation $\Delta$ is itself $\chi$-distinguished by 
$P(K)\cap uGL(n,F)u^{-1}$ fore some character $\chi$. 
Studying the structure of this subgroup, implies that the $\Delta_i$'s must be of the requested form. The group structure of $P(K)\cap uGL(n,F)u^{-1}$ 
was studied in \cite{JLR} when the element $uu^{-\sigma}$ (which is actually an involution in $\mathfrak{S}_n$) normalizes the standard Levi subgroup of $P$. 
Here we reduce the general case to this particular case.\\

The first part of Section 1 is about definitions and notations of the basic objects we use.\\
The second part concerns itself with results of Bernstein and Zelevinsky about classification of discrete series representations in terms of segments, and the computation of 
their Jacquet modules.\\
A set of representatives $R$ of the double classes $P(K)\backslash GL(n,K)/GL(n,F)$ for a standard parabolic subgroup 
$P(K)$ of $GL(n,K)$ was already described in \cite{JLR}. In Section 2, we give a geometric way of obtaining this set, which has its advantage. 
We then show how to reduce the study of the group structure of $P(K)\cap uGL(n,F)u^{-1}$ to the admissible case treated in \cite{JLR}.
 We end by computing the modulus character of this group.\\ 
We eventually prove Theorem \ref{distgen} in the last section.

\subsection*{Acknowledgements}
As this paper is a conclusion of my PHD work, I would like to thank people who helped me during the process.
I thank Corinne Blondel, my advisor Paul Gérardin and my reviewers U.K. Anandavardhanan and Anthony Kable.
 I thank Herv\'e Jacquet, who gave me very helpful bibliographic information about the subject, and for sending me notes containing a statement equivalent to Theorem \ref{distgen} as a conjecture, around the time it was also conjectured in \cite{M3}. I thank Paul G\'erardin, Ivan Marin and J-F. Planchat for fruitful discussions during the writing of the paper. 

\section{Preliminaries}

\subsection{Notations and definitions}

We fix until the end of this paper a local field $F$ of characteristic zero. We fix a quadratic extension $K$ of $F$.\\

If $G$ is a group acting on two vector spaces $V$ and $V'$, then $Hom_G(V,V')$ designates the space of $G$-equivariant morphisms from $V$ to $V'$. \\
If $E$ is a finite extension of $F$, we denote by $v_E$ the discrete valuation of $E$, which verifies that $v_E(\pi_E)$ is $1$ if $\pi_E$ is a prime element of $E$.
 We denote by $q_E$ the cardinality of the residual field of $E$. We denote by $|\ |_E$ the absolute value of $E$ defined by $|x|_E = q_E^{-v_E (x)}$, for $x$ in $E$.
 We denote by $R_E$ the valuation ring of $E$, and by $P_E$ the maximal ideal of $R_E$. Finally we denote by $W_E$ the Weil group of $E$ (cf. \cite{T}), and by $W'_E$
 the Weil-Deligne group of $E$. The group $W'_E$ is the semi-direct product $W_E \rtimes SL(2,\mathbb{C})$, with $W_E$ acting by its quotient group 
$q_E^\mathbb{Z}$ on $SL(2,\mathbb{C})$. A Frobenius element in $W_E$ acts on $SL(2,\mathbb{C})$ is by conjugation by 
the matrix $\left( \begin{array}{cc}
q_E & 0\\
0 & 1 \end{array}\right)$.\\

Let $G$ be an affine algebraic group defined over the field $F$. If $E$ is an extension of $F$, we denote by $G(E)$ the group of the $E$-points of $G$. 
Such a group is locally compact and totally disconnected, we will call it an $l$-group. \\
Let $n$ be a positive integer, we denote by $M_n= M_n(\bar{F})$ the additive group of $n\times n$ matrices with entries in $\bar{F}$, 
and we denote by $G_n$ the general linear group $GL(n,\bar{F})$ of invertible matrices of $M_n(\bar{F})$. If $M$ belongs to $M_n$, we denote its determinant by $det(M)$.\\
We call partition of a positive integer $n$, a family $\bar{n}=(n_1,\dots,n_t)$ of positive integers (for a certain $t$ in $\mathbb{N}-\left\lbrace 0\right\rbrace $), 
such that the sum $n_1+\dots+n_t$ is equal to $n$. To such a partition, we associate an algebraic subgroup of $G_n$ denoted by $P_{\bar{n}}$, 
given by matrices of the form $$\left (\begin{array}{cccccccc}
g_1 & \star & \star & \star & \star \\
    & g_2 & \star &  \star & \star \\
    &  & \ddots & \star & \star \\  
    &  &     & g_{t-1} &  \star \\
    &  &     &   & g_t
 \end{array}\right),$$ with $g_i$ in $G_{n_i}$ for $i$ between $1$ and $t$. We call it the standard parabolic subgroup associated with the partition $\bar{n}$.
 We call parabolic subgroup any conjugate of a standard parabolic subgroup. We denote by $N_{\bar{n}}$ its unipotent radical subgroup, 
given by the matrices $$\left (\begin{array}{cccccccc}
I_{n_1} & \star & \star\\
     &  \ddots & \star\\  
    &  & I_{n_t}
 \end{array}\right)$$ and by $M_{\bar{n}}$ its standard Levi subgroup given by the matrices $$\left (\begin{array}{cccccccc}
g_{1} &   &  \\
     & \ddots  &  \\  
    &  & g_{t}
 \end{array}\right) $$ with the $g_i$'s in $G_{n_i}$. The group $P_{\bar{n}}$ identifies with the semidirect product $N_{\bar{n}}\rtimes M_{\bar{n}}$.\\

For $\bar{n}=(1,\dots,1)$, we denote by $B_n$ the Borel subgroup $P_{\bar{n}}$, and by $T_n$ its standard Levi subgroup. The Lie algebra of $Lie(G_n)$ decomposes as
$Lie(T_n)\oplus ( \oplus_{\alpha \in \Phi} Lie(N_{\alpha}))$ under the adjoint action of $T_n$. The group $T_n$ acts on each one-dimensional space $Lie(N_{\alpha})$ 
by a non-trivial 
character $\alpha$, and by the trivial character on $Lie(T_n)$. The set $\Phi$ is the root system of $G_n$ with respect to $T_n$. The group $N_{\alpha}$ is the unipotent 
subgroup of $G_n$ with Lie algebra $Lie(N_{\alpha})$. We denote by $\Phi^+$ the roots $\alpha$ such that $N_{\alpha}\subset B_n$ and by $\Phi^-$ the other roots. 
We denote by $\Phi_M$ or $\Phi_{\bar{n}}$ the set of roots $\alpha$ with $N_\alpha \subset M$ for $M=M_{\bar{n}}$, by $\Phi_M^+$ and $\Phi_M^-$ the intersections 
$\Phi_M \cap \Phi^+$ and $\Phi_M \cap \Phi^-$ respectively.\\
 The quotient $W=N_{G_n}(T_n)/T_n$ of the normalizer of $T_n$ in $G_n$ by $T_n$, is called the Weyl group of $G_n$, it identifies with the symmetric group 
$\mathfrak{S_n}$, and permutes the roots in $\Phi$. The quotient $W_M=N_{M_{\bar{n}}}(T_n)/T_n$ of the normalizer of $T_n$ in $M_{\bar{n}}$ by $T_n$, 
is called the Weyl group of $M_{\bar{n}}$, it identifies with the product group $\prod_{i=1}^t \mathfrak{S_{n_i}}$, and permutes the roots in $\Phi_M$. \\

Let $G$ be an $l$-group (i.e. locally compact totally disconnected), we denote by $d_G g$ or simply $dg$ if the context is clear, a left Haar measure $G$. For $x$ in $G$, we denote by $\delta_G (x)$ the positive number defined by the relation $d_g (gx)= \delta_G^{-1} (x)d_g (g)$. The modulus character $\Delta_G$ defines a morphism from $G$ into $\mathbb{R}_{>0}$. We denote by $\delta_G$ (which we also call modulus character) the morphism from $G$ into $\mathbb{R}_{>0}$ defined by $x \mapsto \Delta_G(x^{-1})$.\\

Let $G$ be an $l$-group, and $H$ a subgroup of $G$, a representation $(\pi,V)$ of $G$ is said to be smooth 
if for any vector $v$ of the vector space $V$, there is an open subgroup $U_v$ of $G$ stabilizing $v$ through $\pi$. We denote by $V^H$ the subspace of fixed points of
 $V$ under $H$. The category of smooth representations of $G$ is denoted by $Alg(G)$. If $(\pi,V)$ is a smooth representation of $G$, we denote by $\pi^{\vee}$ its dual 
representation in the smooth dual space $\tilde{V}$ of $V$.\\
 We will only consider smooth complex representations of $l$-groups.\\

\begin{df}
Let $G$ be an $l$-group, $H$ a closed subgroup of $G$, and $(\pi,V)$ a representation of $G$. If $\chi$ is a character of $H$, we say that the representation $\pi$ is $\chi$-distinguished under $H$, if it admits on its space a nonzero linear form $L$, verifying $L(\pi(h)v)=\chi(h)L(v)$ for all $v$ in $V$ and $h$ in $H$. If $\chi=1$, we say $H$-distinguished instead of $1$-distinguished. We omit ``$H$-'' if the context is clear.
\end{df}

If $\rho$ is a complex representation of $H$ in  $V_{\rho}$, we denote by $C^{\infty}(H \backslash X, \rho, V_{\rho})$ the space of functions $f$ from $X$ to $V_{\rho}$, 
fixed under the action by right translation of some compact open subgroup $U_f$ of $G$, and which verify $f(hx)=\rho(h)
 f(x)$ for $h \in H$, and $x \in X$. We denote by $C_c^{\infty}(H \backslash X, \rho, V_{\rho})$ the subspace of functions with support compact modulo 
$H$ of $C^{\infty}(H \backslash X, \rho, V_{\rho})$.\\ 
We denote by $Ind _H^G (\rho)$ the representation by right translation of $G$ in $C^{\infty}(H \backslash G, \rho,
 V_{\rho})$ and by $ind _H^G (\rho)$ the representation by right translation of $G$ in $C_c^{\infty}(H \backslash G,  \rho,
 V_{\rho})$.  We denote by ${Ind'} _H^G (\rho)$ the normalized induced representation $Ind _H^G ((\Delta _G /\Delta _H)^{1/2} \rho)$ and by ${ind'} _H^G (\rho)$ 
the normalized induced representation $ind _H^G ((\Delta _G /\Delta _H)^{1/2} \rho)$.\\  
Let $n$ be a positive integer, and $\bar{n}=(n_1,\dots,n_t)$ be a partition of $n$, and suppose that we have a representation $(\rho_i, V_i)$ of $G_{n_i}(K)$ for each 
$i$ between $1$ and $t$. 
Let $\rho$ be the extension to $P_{\bar{n}}$ of the natural representation 
$\rho_1 \otimes \dots \otimes \rho_t$ of $M_{\bar{n}}\simeq G_{n_1}(K) \times \dots \times G_{n_t}(K)$, trivial on $N_{\bar{n}}$. 
We denote by $\rho_1 \times \dots \times \rho_t$ the representation ${ind'} _{P_{\bar{n}}(K)}^{G_n (K)} (\rho)= {Ind'} _{P_{\bar{n}}(K)}^{G_n (K)} (\rho)$.

\subsection{Segments and quasi-square integrable representations}

From now on we assimilate representations to their isomorphism classes.\\

In this subsection, we recall results of Bernstein and Zelevinsky about quasi-square-integrable representations, more precisely their classification in terms of segments
 associated to supercuspidal representations, and how to compute their Jacquet modules.\\

If $\pi$ is an irreducible representation of $G_n(K)$, one denotes by $c_{\pi}$ its central character.\\
We recall that an irreducible representation of $G_n(K)$ is called supercuspidal if all its Jacquet modules associated to proper standard Levi subgroups are zero, which is 
equivalent to the fact that it has a coefficient with support compact modulo the center $Z_n(K)$ of the group $G_n(K)$.\\

An irreducible representation $\pi$ of the group $G_n(K)$ is called quasi-square-integrable, if there exists a positive character $\chi$ of the multiplicative group $K^*$, 
such that one of the coefficients $g\mapsto c(g)$ of $\pi$ verifies that $c(g)\chi(det(g))$ is a square-integrable function for a Haar measure of $G_n(K)/Z_n(K)$. One says that the representation $\pi$ is  square-integrable (or belongs to the discrete series of $G_n(K)$) if one can choose $\chi$ to be trivial.\\
If $\rho$ is a supercuspidal representation of $G_r(K)$ for a positive integer $r$, one denotes by $\rho|\ |_K$ the representation obtained by twist with the character 
$|\det(\ )|_K$.\\
 In general, if $E$ is an extension of $F$, and $\chi$ is a character of $E^*$, we will still denote by $\chi$ the character $\chi\circ det$.\\
 We call segment a list $\Delta$ of supercuspidal representations of the form $$\Delta=[\rho|\ |_K^{l-1},\rho|\ |_K^{l-2},\dots, \rho] $$ for a positive integer $l$. 
We call length of the segment the integer $rl$.  
We have the following theorem (Theorem 9.3 of \cite{Z}) that classifies quasi-square integrable representations in terms of segments.

\begin{thm} Let $\rho$ be a supercuspidal representation of $G_r(K)$ for a positive integer $r$.
 The representation $\rho \times \rho|\ |_F \times \dots \times \rho|\ |_F^{l-1}$ of $G_{rl}(K)$ is reducible, with a unique irreducible quotient 
that we denote by $[\rho|\ |_K^{l-1},\rho|\ |_K^{l-2},\dots, \rho]$. A representation $\Delta$ of the group $G_n(K)$ is quasi-square-integrable if and only if 
there is $r\in \left\lbrace 1,\dots, n \right\rbrace$ and $l\in \left\lbrace 1,\dots, n \right\rbrace$ with $lr=n$, and $\rho$ a supercuspidal representation
 of $G_r(K)$, such that the representation $\Delta$ is equal to $[\rho|\ |_K^{l-1},\rho|\ |_K^{l-2},\dots, \rho]$. The representation $\rho$ is then unique. \end{thm}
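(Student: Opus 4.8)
The plan is to reproduce Zelevinsky's argument, resting on two tools of Bernstein--Zelevinsky theory: the geometric lemma, which expresses the Jacquet module of a parabolically induced representation as a sum indexed by Weyl group double cosets, and Casselman's criterion, which characterizes square-integrability modulo the centre of an irreducible representation by a system of strict inequalities on the central exponents occurring in all of its Jacquet modules relative to proper standard parabolic subgroups. First I would study the structure of $\zeta_l := \rho\times\rho|\ |_K\times\cdots\times\rho|\ |_K^{l-1}$, a representation of $G_{rl}(K)$. The twists $\rho|\ |_K^{i}$, $0\le i\le l-1$, are pairwise non-isomorphic supercuspidal representations (they have distinct central characters), so by the geometric lemma the Jacquet module of $\zeta_l$ along the standard parabolic with Levi $G_r(K)\times\cdots\times G_r(K)$ is the sum over $w\in\mathfrak{S}_l$ of the $l!$ pairwise distinct terms $\rho|\ |_K^{w(1)-1}\otimes\cdots\otimes\rho|\ |_K^{w(l)-1}$. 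In particular every irreducible subquotient of $\zeta_l$ has supercuspidal support the fixed multiset $\{\rho,\rho|\ |_K,\dots,\rho|\ |_K^{l-1}\}$, each member occurring once, and is determined by which of those terms it carries. Since this Jacquet module is multiplicity-free, an adjunction argument shows $\zeta_l$ has a unique irreducible quotient $Q_l$, occurring with multiplicity one in a composition series, and dually a unique irreducible submodule $S_l$. That $\zeta_l$ is reducible for $l\ge2$ is part of the classification; concretely one has $Q_l\ne S_l$, as a comparison of their highest non-vanishing Bernstein--Zelevinsky derivatives shows. This proves the first assertion, with $[\rho|\ |_K^{l-1},\dots,\rho]:=Q_l$.

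Next I would show $Q_l$ is quasi-square-integrable. Replacing $\rho$ by a suitable twist by a real power of $|\ |_K$, we may assume $Q_l$ has unitary central character, and it then suffices to prove $Q_l$ square-integrable. Using transitivity of Jacquet functors, the multiplicity-one structure just found, and the fact that $Q_l$ is the unique irreducible quotient of $\zeta_l$, a further application of adjunction and the geometric lemma identifies, for each partition $\bar{m}$ of $rl$, the Jacquet module of $Q_l$ along $M_{\bar{m}}(K)$: it vanishes unless every part of $\bar{m}$ is a multiple of $r$, in which case it is the single irreducible term got by chopping the segment $\{\rho,\dots,\rho|\ |_K^{l-1}\}$ into consecutive blocks prescribed by $\bar{m}$ and forming the corresponding tensor product of generalized Steinberg representations; along the full Levi $G_r(K)^l$ this is $\rho|\ |_K^{l-1}\otimes\cdots\otimes\rho$. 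The central exponent of each such term, after the shift removing the centre of $G_{rl}$, is proportional to a strictly decreasing run of integers grouped according to $\bar{m}$, hence lies strictly inside the cone demanded by Casselman's criterion. Therefore $Q_l$, and so $[\rho|\ |_K^{l-1},\dots,\rho]$, is quasi-square-integrable.

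For the converse, let $\Delta$ be a quasi-square-integrable representation of $G_n(K)$; twisting by a real power of $|\ |_K$, assume it is square-integrable, with supercuspidal support a multiset of supercuspidal representations. First, all members of this multiset lie on a single line $\rho|\ |_K^{\mathbb{Z}}$ for one fixed supercuspidal $\rho$ of some $G_r(K)$: otherwise $\Delta$ is a constituent of a product $\tau\times\tau'$ whose factors have supports meeting distinct Bernstein lines; the geometric lemma then forces $\tau\times\tau'$ to be irreducible, so $\Delta=\tau\times\tau'$, and its Jacquet module along $M_{(m,m')}(K)$ carries the ``wrong order'' term $\tau'\otimes\tau$ with a non-dominant central exponent, contradicting Casselman. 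So the support is $\{\rho|\ |_K^{a_1},\dots,\rho|\ |_K^{a_l}\}$ with $a_j\in\mathbb{Z}$; by the geometric lemma the Jacquet module of $\Delta$ along $G_r(K)^l$ is a non-zero sub-multiset of $\bigoplus_w\rho|\ |_K^{a_{w(1)}}\otimes\cdots\otimes\rho|\ |_K^{a_{w(l)}}$, and imposing Casselman's inequalities on its terms and on all their further Jacquet modules forces $\{a_1,\dots,a_l\}$ to be, up to order, a run $a,a+1,\dots,a+l-1$ of consecutive integers each with multiplicity one --- a repetition or a gap would produce an exponent outside Casselman's cone. Thus the support of $\Delta$ is exactly the segment attached to $\rho|\ |_K^{a}$; since $Q_l$ for this segment is square-integrable with this support, while the remaining irreducible representations of the same support carry split terms with non-dominant exponents in their Jacquet modules and so are not square-integrable, we get $\Delta=Q_l=[\rho|\ |_K^{l-1},\dots,\rho]$ for an appropriate $\rho$. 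Uniqueness of $\rho$ is then just the uniqueness, up to permutation, of the supercuspidal support.

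The main obstacle is the rigidity in the converse: converting the single system of strict inequalities supplied by Casselman's criterion --- positivity of \emph{all} central exponents in \emph{all} Jacquet modules --- into the conclusion that the supercuspidal support is forced to be a segment without repetitions. This requires careful control of how the geometric lemma feeds exponents into that positivity condition, and in particular of how both ``gaps'' and ``repetitions'' in $\{a_j\}$ get excluded. A secondary delicate point is the proof that $Q_l\ne S_l$ in the first step, hence the reducibility of $\zeta_l$, which genuinely rests on the Bernstein--Zelevinsky derivative calculus and not on the geometric lemma by itself.
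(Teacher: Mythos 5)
The paper does not prove this statement: it is cited verbatim as Theorem~9.3 of Zelevinsky~[Z], so there is no ``paper's own proof'' to compare against. (Incidentally, the $|\ |_F$ in the paper's statement is a misprint for $|\ |_K$ --- the representation $\rho$ lives on $G_r(K)$ --- and the word ``reducible'' tacitly assumes $l\geq 2$; you silently and correctly repair both.)

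Judged on its own, your sketch follows the standard Bernstein--Zelevinsky route: the geometric lemma to compute the cuspidal-level Jacquet module of $\zeta_l$, multiplicity-one plus Frobenius reciprocity to get the unique irreducible sub- and quotient, a derivative computation to separate them, and Casselman's criterion in both directions. The strategy is right and this is essentially how the result is proved in the literature, so there is no objection to the overall plan. One caveat, though, about the point you yourself flag as the ``main obstacle'': the claim that ``a repetition or a gap would produce an exponent outside Casselman's cone'' is too optimistic as written. A gap really is killed outright at the cuspidal Jacquet module (if $\rho$ and $\rho|\ |_K^{2}$ both occur but $\rho|\ |_K$ does not, those two factors commute, so both orderings appear and one fails the criterion). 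A repetition is more delicate: with support $\{\rho,\rho,\rho|\ |_K\}$, the single term $\rho|\ |_K\otimes\rho\otimes\rho$ has strictly positive partial sums, so no contradiction arises from one term of one Jacquet module. Excluding this case needs either the full multiplicity structure of the supercuspidal Jacquet module, or a run over the intermediate Levis $G_{2r}\times G_r$ and $G_r\times G_{2r}$ together with the classification of constituents of $\rho\times\rho|\ |_K$, or Zelevinsky's own detour through highest derivatives and the multisegment classification. Any written-up version of your argument would have to carry one of these out explicitly rather than appeal to ``positivity of all exponents'' in the abstract.
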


A representation of this type is  square-integrable if and only if it is unitarizable, or equivalently if and only if $\rho|\ |_F^{(l-1)/2}$ is unitarizable 
(i.e. its central character is unitary). We say that two segments are linked if none of them is a subsegment of the other, but their union is still a segment.\\
Now we allow ourselves to call segment a quasi-square-integrable representation, and to denote such a representation by its corresponding segment.\\
We will also use the following useful notation: if $\Delta_1$ and $\Delta_2$ are two disjoint segments, which are linked, and such that $\Delta_1$ precedes $\Delta_2$
 (i.e. the segment $\Delta_1$ is of the form $[\rho_1|\ |_K^{l_1-1},\rho_1|\ |_K^{l_1-2},\dots, \rho_1]$, the segment $\Delta_2$ is of the form 
$[\rho_2|\ |_K^{l_2-1},\rho_1|\ |_K^{l_2-2},\dots, \rho_2]$, with $\rho_1=\rho_2|\ |_K^{l_2}$), we denote by $[\Delta_1,\Delta_2]$ the segment 
$[\rho_1|\ |_K^{l_1-1},\dots,\rho_2]$.

Let $P$ be a standard parabolic subgroup of $G_n(K)$, $M$ its standard Levi subgroup, and let $P'$ be a standard parabolic subgroup of $M$, with standard Levi subgroup $M'$,
 and unipotent radical $N'$. We recall that the normalized Jacquet module of a representation $(\rho,V)$ of $M$, associated to $M'$, which we denote by $r_{M',M}(\rho)$, is the representation of $M'$ on the space $V/V(N')$ (where $V(N')$ is the subspace of $V$ generated
by vectors of the form $v-\pi(n')v$ for $v$ in $V$ and $n'$ in $N'$), defined by $r_{M',M}(\rho)(m')(v+V(N'))= \delta_{M'}^{-1/2}\rho(m')v +V(N')$.

The following proposition (Proposition 9.5 of \cite{Z}), explains how to compute normalized Jacquet modules of segments.

\begin{prop}\label{Z}
Let $\rho$ a supercuspidal representation of $G_r(K)$ for a positive integer $r$. Let $\Delta$ be the segment $[\rho|\ |_K^{l-1},\rho|\ |_K^{l-2},\dots, \rho]$, for a positive integer $l$. Let $M$ be a standard Levi subgroup of $G_{lr}(K)$ associated with a partition $(n_1,\dots,n_t)$ of $lr$.\\
 The representation $r_{M,G}(\Delta)$ is zero, unless $(n_1,\dots,n_t)$ admits $\underbrace{(r,\dots,r)}_{l \ times}$ as a sub partition, in which case $\Delta$ is of the form $[\Delta_1,\dots,\Delta_t]$, with $\Delta_i$ of length $n_i$, and $r_{M,G}(\Delta)$ is equal to the tensor product $\Delta_1\otimes\dots\otimes\Delta_t$.  

\end{prop}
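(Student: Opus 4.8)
The plan is to derive this from the geometric lemma of Bernstein and Zelevinsky together with Theorem 9.3 of \cite{Z}. First I would treat the case where $r$ does not divide some $n_i$. The cuspidal support of $\Delta$ is the multiset $\{\rho, \rho|\ |_K, \dots, \rho|\ |_K^{l-1}\}$ of $l$ pairwise non-isomorphic supercuspidal representations of $G_r(K)$. If $r_{M,G}(\Delta)$ were nonzero it would have an irreducible subquotient $\sigma_1 \otimes \dots \otimes \sigma_t$, and the cuspidal support of $\sigma_i$, being a sub-multiset of that of $\Delta$, would be a union of supercuspidals of $G_r(K)$ whose dimensions add up to $n_i$; so $r \mid n_i$, a contradiction. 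Hence $r_{M,G}(\Delta) = 0$ in this case.

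\emph{Reduction to a maximal parabolic.} Assume now $r \mid n_i$ for every $i$. I would induct on $t$ via transitivity of the Jacquet functor: writing $M^{\flat} = G_{n_1}(K) \times G_{n-n_1}(K)$, one has $M \subset M^{\flat}$ and $r_{M,G} = r_{M,M^{\flat}} \circ r_{M^{\flat},G}$, where $r_{M,M^{\flat}}$ is the identity on the first factor and the appropriate Jacquet functor of $G_{n-n_1}(K)$ on the second. Thus it suffices to handle the two-block case $r_{M^{\flat},G}(\Delta) = \Delta_1 \otimes \Delta'$, with $\Delta'$ a segment of length $n - n_1$, and then feed $\Delta'$ into the inductive hypothesis; this reproduces the decomposition $\Delta = [\Delta_1, \dots, \Delta_t]$ into consecutive sub-segments of lengths $n_1, \dots, n_t$.

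\emph{The two-block case.} Put $\rho_j = \rho|\ |_K^{j}$ for $0 \le j \le l-1$. By Theorem 9.3 of \cite{Z}, $\Delta = [\rho_{l-1}, \dots, \rho_0]$ is the unique irreducible quotient of $\rho_0 \times \rho_1 \times \dots \times \rho_{l-1}$ and the unique irreducible submodule of $\rho_{l-1} \times \dots \times \rho_0$, with multiplicity one in both. Let $M = G_{ar}(K) \times G_{br}(K)$ with $a+b=l$, and set $\Delta_1 = [\rho_{l-1}, \dots, \rho_b]$, $\Delta_2 = [\rho_{b-1}, \dots, \rho_0]$, so $\Delta = [\Delta_1, \Delta_2]$ and $\Delta_1$ precedes $\Delta_2$. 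Since the Jacquet functor is exact, $r_{M,G}(\Delta)$ is a quotient of $r_{M,G}(\rho_0 \times \dots \times \rho_{l-1})$; the geometric lemma, using that the $\rho_j$ are pairwise non-isomorphic supercuspidal (so that all proper Jacquet modules of the inducing data vanish and only the double cosets carrying $(G_r)^l$ into $M$ survive), presents $r_{M,G}(\rho_0 \times \dots \times \rho_{l-1})$ with a filtration whose successive quotients are the $(\rho_{i_1} \times \dots \times \rho_{i_a}) \otimes (\rho_{j_1} \times \dots \times \rho_{j_b})$ indexed by the $a$-subsets $A = \{i_1 < \dots < i_a\}$ of $\{0, \dots, l-1\}$, with $\{j_1 < \dots < j_b\}$ the complement. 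On the other hand, by Zelevinsky's analysis of products of linked segments in \cite{Z} ($\Delta_1 \times \Delta_2$ has length two and contains $\Delta = [\Delta_1, \Delta_2]$ with multiplicity one), Frobenius reciprocity yields $\dim \mathrm{Hom}(r_{M,G}(\Delta), \Delta_1 \otimes \Delta_2) = \dim \mathrm{Hom}(\Delta, \Delta_1 \times \Delta_2) = 1$, so $\Delta_1 \otimes \Delta_2$ lies in the cosocle of $r_{M,G}(\Delta)$ with multiplicity one. To pass from this to $r_{M,G}(\Delta) = \Delta_1 \otimes \Delta_2$ I would use the dual characterization of $\Delta$ as the unique irreducible submodule of $\rho_{l-1} \times \dots \times \rho_0$ to pin down the socle of $r_{M,G}(\Delta)$ through the second adjointness theorem, and then eliminate every other irreducible constituent $\sigma_1 \otimes \sigma_2$ using Zelevinsky's precedence relations: the cuspidal support of $\sigma_1$ is forced to be the initial arc $\{\rho_{l-1}, \dots, \rho_b\}$, whence $\sigma_1 = \Delta_1$ and $\sigma_2 = \Delta_2$.

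The hard part is exactly this final elimination in the two-block case — showing that of the $\binom{l}{a}$ terms produced by the geometric lemma exactly one contributes to $r_{M,G}(\Delta)$, and with multiplicity one. This is where the single-segment hypothesis genuinely enters, via the two extreme orderings of $\rho_0, \dots, \rho_{l-1}$ and the combinatorics of linked arcs of a chain; by contrast the vanishing, the transitivity reduction, and the identification of the cosocle by Frobenius reciprocity are routine bookkeeping.
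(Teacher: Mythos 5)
The paper does not prove this proposition: it is quoted directly as Proposition 9.5 of Zelevinsky \cite{Z}, so there is no internal proof to compare against. Assessing your sketch on its own terms: the vanishing when $r\nmid n_i$ via cuspidal support, the transitivity reduction to $t=2$, the geometric-lemma filtration of $r_{M,G}(\rho_0\times\dots\times\rho_{l-1})$ with its $\binom{l}{a}$ pieces, and the Frobenius computation $\dim\mathrm{Hom}(r_{M,G}(\Delta),\Delta_1\otimes\Delta_2)=1$ are all correct.

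The genuine gap is exactly the step you yourself flag as the hard part, and the sketch you offer would not close it. Knowing that $\Delta_1\otimes\Delta_2$ occurs once in the cosocle (Frobenius) and once in the socle (second adjointness, via $\Delta_2\times\Delta_1$) does not determine $r_{M,G}(\Delta)$: a module may have equal irreducible socle and cosocle while carrying further Jordan--H\"older factors in between, including additional copies of $\Delta_1\otimes\Delta_2$. More seriously, your elimination rests on the assertion that any constituent $\sigma_1\otimes\sigma_2$ of $r_{M,G}(\Delta)$ must have the cuspidal support of $\sigma_1$ concentrated on the initial arc $\{\rho_{l-1},\dots,\rho_b\}$; this is not a corollary of ``precedence relations'' but is essentially the content of the proposition itself, and even granting it, for $a\ge 2$ there are several irreducibles of $G_{ar}(K)$ with that cuspidal support (one for each multisegment refinement of the arc), so $\sigma_1=\Delta_1$ would still need an argument. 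The standard way to close this, and the one used in \cite{Z}, is the theory of derivatives: the highest shifted derivative of a segment representation is obtained by deleting the end cuspidal, and derivatives compute the Jacquet module along the final block, feeding an induction. Without that ingredient, or an explicit length count on $r_{M,G}(\rho_0\times\dots\times\rho_{l-1})$ distributed over the Jordan--H\"older factors of the full induced representation, your outline does not establish the equality $r_{M,G}(\Delta)=\Delta_1\otimes\Delta_2$.
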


\section{Double classes $P(K)\backslash G_n(K)/G_n(F)$}

Let $\bar{n}$ be a partition $(n_1 ,\dots,n_{t})$ of a positive integer $n$, we denote by $P$ the standard parabolic subgroup $P_{\bar{n}}(K)$ of $G=G_n(K)$, 
by $M$ its standard Levi subgroup, by $B$ the group $B_n(K)$, by $T$ the group $T_n(K)$. By abuse of notation, the group $N_\alpha$ will be $N_{\alpha}(K)$. 
We denote by $H$ the group $G_n(F)$.\\
We study in first place the double classes of $H\backslash G/P$. This has already been done in Section 6 of \cite{JLR} and one can check that the representatives we obtain 
are the same they obtain (see Remark \ref{JLR}). This set is described in \cite{JLR} as the involutions in the set of left and right $W_M$-reduced elements of $W$.
 However we give a more geometric proof in which every representative $u$ of the double classes of $H\backslash G/P$ comes naturally equipped with a 
sub-partition $s$ of $(n_1 ,\dots,n_{t})$, or equivalently a standard parabolic subgroup $P_s$ of $G$ contained in $P$. This new standard parabolic subgroup will have 
the nice property that the representative $u$ is $P_s$-admissible (in the terminology of \cite{JLR}, see Definition 1 of section 6).
 This will then allow us to reduce to the study of the group $P\cap uH u^{-1}$ to that of the group $P_s\cap uH u^{-1}$, which has been carried out in \cite{JLR}.\\

We identify the quotient space $G/P$ with a flag manifold given by sequences (called $\bar{n}$-flags) $0\subset V_1 \subset ... \subset V_{t-1} \subset V = K^n$, 
where $V_j$ is a vector subspace of $V$, of dimension $n_1+ \dots + n_j$. Studying the double classes of $H\backslash G/P$, is then equivalent to understand the 
$H$-orbits of the flag manifold $G/P$. This is done in the following theorem.

\begin{thm}\label{H-orbits} The $H$-orbits of the flag manifold $G/P$, are characterized by the integers $dim(V_i \cap V_j^{\sigma})$, for $1\leq i \leq j \leq t-1$, 
which means that two $\bar{n}$-flags $D=0\subset V_1 \subset ... \subset V_{t-1} \subset V$ and $D'=0\subset V'_1 \subset ... \subset V'_{t-1} \subset V$ are in the same orbit 

under $H$, if and only if $dim(V_i \cap V_j^{\sigma})=dim(V'_i \cap {V'}_j^{\sigma})$ for $1\leq i \leq j \leq {t-1}$.

\end{thm}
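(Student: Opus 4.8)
The plan is to prove Theorem \ref{H-orbits} by classifying the $H$-orbits on the flag manifold $G/P$ via a normal form argument for the relative position of a flag and its Galois conjugate. First I would fix a $\bar{n}$-flag $D = (0 \subset V_1 \subset \dots \subset V_{t-1} \subset V)$ and consider the family of subspaces $V_j^\sigma = \{v^\sigma : v \in V_j\}$, which form another $\bar{n}$-flag $D^\sigma$. The collection of integers $d_{ij} = \dim(V_i \cap V_j^\sigma)$ is manifestly an invariant of the $H$-orbit of $D$: if $h \in H = G_n(F)$ then $h$ commutes with $\sigma$ (since $h$ has entries in $F$), so $h(V_i \cap V_j^\sigma) = hV_i \cap (hV_j)^\sigma$, whence the $d_{ij}$ are unchanged. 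The substance of the theorem is the converse: these numbers are a complete set of invariants.

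For the converse, the key step is to produce, from the data of the $d_{ij}$'s alone, a basis of $V = K^n$ adapted simultaneously to $D$ and to $\sigma$, in such a way that two flags with the same invariants yield $\sigma$-compatibly related bases. Concretely, I would build a basis $(e_1, \dots, e_n)$ of $K^n$ such that each $V_j$ is spanned by an initial-type subset of the $e_k$'s, and such that the semilinear map $v \mapsto v^\sigma$ acts on the basis in a controlled way — ideally permuting a subset of basis vectors (those giving a common refinement on which $\sigma$ acts by an honest permutation, an involution in $\mathfrak{S}_n$, recovering the ``involution'' description of \cite{JLR}) and acting diagonally or in small blocks on the rest. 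The construction proceeds by induction on $t$ (or on $n$): using $d_{ij}$ one knows the dimensions of all the spaces $V_i \cap V_j^\sigma$ and of the sums $V_i + V_j^\sigma$; a Zelevinsky/Bruhat-type argument, or repeated application of the second isomorphism theorem, lets one choose complements step by step. One must check at each stage that the subspace already constructed is defined over $F$ (i.e. $\sigma$-stable) precisely on the ``permutation part'', so that Witt-type extension applies to match two flags. Then an element $h \in GL(n,F)$ carrying one adapted basis to the other carries $D$ to $D'$.

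The main obstacle I expect is the bookkeeping of the inductive normal form: one needs to simultaneously diagonalize the flag and the conjugation, and the naive greedy choice of complements need not be $\sigma$-equivariant. The clean way around this is to first treat the two extreme cases and then interpolate: (i) when $V_j^\sigma = V_j$ for all $j$ — then $D$ is an $F$-flag and the statement is the classical transitivity of $GL(n,F)$ on $F$-flags of fixed type; (ii) when $D$ and $D^\sigma$ are in ``general position'' relative to each other — a Hilbert 90 / Galois descent argument handles a single pair $(W, W^\sigma)$ with $W \cap W^\sigma = 0$ or $W + W^\sigma = V$, since a $K$-subspace $W$ with $W \oplus W^\sigma = K^n$ is the ``graph'' of the $\sigma$-conjugation restricted to an $F$-form. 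The general flag is assembled from these pieces graded by the jumps of the function $(i,j) \mapsto d_{ij}$; the combinatorial indexing of those pieces is exactly the sub-partition $s$ of $(n_1,\dots,n_t)$ alluded to after the theorem, and verifying that the assembled $h$ lies in $GL(n,F)$ and respects the whole flag is the last routine check.

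Finally, I would record explicitly the bijection between $H$-orbits and admissible matrices of invariants $(d_{ij})_{1 \le i \le j \le t-1}$ (subject to the obvious inequalities coming from dimension counts and the flag inclusions), and note that choosing the permutation-type representative from case (i)/(ii) recovers the set $R$ of \cite{JLR} as the left-and-right $W_M$-reduced involutions in $W$, together with the promised parabolic $P_s \subseteq P$ for which $u$ is $P_s$-admissible; this is what later sections need.
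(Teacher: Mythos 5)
Your plan is essentially the paper's approach, and the paper's execution is worth internalizing because it dissolves the two worries you flag (non-$\sigma$-equivariance of greedy complements, and the bookkeeping of the normal form) in one stroke rather than via an extreme-cases-and-interpolate detour. Set $V_0 = 0$, $V_t = V$, and for $1 \le i \le j \le t$ choose $S_{i,j}$ to be \emph{any} complement of $V_i \cap V_{j-1}^{\sigma} + V_{i-1} \cap V_j^{\sigma}$ inside $V_i \cap V_j^{\sigma}$, with the single constraint that $S_{i,i}$ be chosen defined over $F$ (possible by Galois descent, i.e.\ a subspace equal to its $\sigma$-conjugate has an $F$-basis). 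Then declare $S_{j,i} := S_{i,j}^{\sigma}$; applying $\sigma$ to the defining relation shows this is automatically a valid complement for the transposed indices, so $\sigma$-equivariance is built in by fiat rather than something to be checked a posteriori. An easy induction (the paper's Lemma on the filtration) gives $V_i = \bigoplus_{k \le i} \bigoplus_{l} S_{k,l}$, hence $V = \bigoplus_{k,l} S_{k,l}$, and $\dim S_{i,j}$ is determined by the invariants $\dim(V_a \cap V_b^{\sigma})$. Given a second flag $D'$ with the same invariants and its own $S'_{i,j}$, pick arbitrary $K$-isomorphisms $h_{i,j} : S_{i,j} \to S'_{i,j}$ for $i<j$, force $h_{j,i}(v) := (h_{i,j}(v^{\sigma}))^{\sigma}$, and pick $h_{i,i}$ $\sigma$-equivariant (both sides are $F$-forms); then $h = \bigoplus h_{k,l}$ commutes with $\sigma$, so lies in $H$, and carries $D$ to $D'$. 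This is exactly your ``adapted basis'' on which $\sigma$ acts by an involution $w \in \mathfrak{S}_n$ up to entrywise conjugation, and the sub-partition $s = (n_{i,j}) = (\dim S_{i,j})$ is the $P_s \subseteq P$ you correctly anticipate the later sections need. Your two ``extreme cases'' are just the special cases $S_{i,j}=0$ for all $i \ne j$ and $S_{i,i}=0$ for all $i$; the direct-sum construction handles the general mixture without any separate interpolation step.
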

 
\begin{proof}
We first state the following classical lemma.

\begin{LM}\label{F-base}
Let $V=K^n$, and $V_F=F^n \subset V$, the $F$-subspace of vectors of $V$ fixed by $\sigma$. A vector subspace $V'$ of $V$ verifies that $V'=V'^{\sigma}$, if and only if one can 
choose a basis of $V'$ in $V_F$, in which case one says that $V'$ is defined over $F$. Any subspace defined over $F$, has a supplementary subspace defined over $F$.
\end{LM}

Now we prove a second lemma about the filtration of $V$ in terms of $V_i\cap V_j^\sigma$ for $V_i$ and $V_j$ in the set of subspaces defining a $\bar{n}$-flag.\\
Let $D$ be a $\bar{n}$-flag, given by the sequence $D=0\subset V_1 \subset ... \subset V_{t-1} \subset V$. We set $V_0=0$ and $V_t=V$.\\
 For $1\leq i \leq j \leq t$, we denote by $S_{i,j}$ a supplementary space of $V_i \cap V_{j-1}^{\sigma}+ V_{i-1} \cap V_j^{\sigma}$ in $V_i \cap V_j^{\sigma}$.\\
 If $i=j$, we add the condition that the supplementary space $S_{i,i}$ we choose is defined over $F$, which is possible according to Lemma \ref{F-base}.\\
 Eventually, for $1\leq i \leq j \leq t$, we denote by $S_{j,i}$, the space $S_{i,j}^{\sigma}$, which is a supplementary space of 
$V_j \cap V_{i-1}^{\sigma} + V_{j-1} \cap V_i^{\sigma}$ in $V_j \cap V_i^{\sigma}$ .\\ 

\begin{LM}\label{dec}
 With these notations, if $(i,j)$ belongs to $\left\lbrace 1,\dots, t \right\rbrace^2$, the space $V_{i-1} +V_i\cap V_j^\sigma$ is equal to the sum 
$$(S_{1,1}\oplus \ldots \oplus S_{1,t})\oplus \ldots \oplus (S_{i-1,1}\oplus \ldots \oplus S_{i-1,t})\oplus (S_{i,1}\oplus \ldots \oplus S_{i,j}).$$
In particular, the space $V_i$ is equal to the direct sum $$(S_{1,1}\oplus \ldots \oplus S_{1,t})\oplus \ldots \oplus (S_{i,1}\oplus \ldots \oplus S_{i,t}).$$
\end{LM}
\begin{proof} Let $x$ belong to $V_{i-1} +V_i\cap V_j^\sigma$, then one can write $x= x_{i-1}+ y_{i,j}$ with $x_{i-1}$ in $V_{i-1}$ and $y_{i,j}$ in $V_i\cap V_j^\sigma$.
 But then $y_{i,j}= y_{i-1,j}+ y_{i,j-1}+s_{i,j}$ with $y_{i-1,j}$ in $V_{i-1}\cap {V_j}^\sigma$, $y_{i,j-1}$ in $V_{i}\cap {V_{j-1}}^\sigma$ and $s_{i,j}$ in $S_{i,j}$. 
Hence we have $x= x'_{i-1}+ y_{i,j-1} +s_{i,j}$, with $x'_{i-1}=x_{i-1}+ y_{i-1,j}$ belonging to $V_{i-1}$. So $x$ belongs to $(V_{i-1} +V_i\cap {V_{j-1}^\sigma}) + S_{i,j}$.
 But by definition of $S_{i,j}$, the preceding sum is actually direct, i.e. $x$ belongs to $(V_{i-1} +V_i\cap V_{j-1}^{\sigma}) \oplus S_{i,j}$. We thus proved that 
$V_{i-1} +V_i\cap {V_j}^\sigma= (V_{i-1} +V_i\cap {V_{j-1}^\sigma}) \oplus S_{i,j}$, and the proof ends by induction.  

\end{proof}

 Getting back to the proof of Theorem \ref{H-orbits}, it is obvious that if two $\bar{n}$-flags $D$ and $D'$ are in the same $H$-orbit, then one must have 
$dim(V_i \cap V_j^{\sigma})=dim(V'_i \cap {V'}_j^{\sigma})$ for $1\leq i \leq j \leq t$.\\
 Conversely, suppose that two $\bar{n}$-flags $D$ and $D'$, satisfy the condition $dim(V_i \cap V_j^{\sigma})=dim(V'_i \cap {V'}_j^{\sigma})$ for $1\leq i \leq j \leq t$.\\
 The assumption $dim(V_i \cap V_j^{\sigma})=dim(V'_i \cap {V'}_j^{\sigma})$ for $1\leq i \leq j \leq t$, implies that for any couple 
$(i,j) \in \left\lbrace 1,\dots, t \right\rbrace^2$, $S_{i,j}$ and $S'_{i,j}$ have the same dimension. For $1\leq i < j \leq t$, we choose a $K$-linear isomorphism 
$h_{i,j}$ between $S_{i,j}$ and $S'_{i,j}$. This defines an isomorphism $h_{j,i}$ between $S_{j,i}$ and $S'_{j,i}$, verifying $h_{j,i}(v)= (h_{i,j}(v^\sigma))^\sigma$ for all 
$v$ in $S_{j,i}$.\\
 Eventually, for each $l$ between $1$ and $t$, as $S_{l,l}$ and $S'_{l,l}$ are defined over $F$, we choose an isomorphism $h_{l,l}$ between $S_{l,l}$ and $S'_{l,l}$, 
verifying that $h_{l,l}(v^\sigma)=h_{l,l}(v)^\sigma$ for all $v\in S_{l,l}$.\\
As the space $V$ is equal to the sum $\underset{(k,l)\in \left\lbrace 1,\dots, t \right\rbrace^2}{\oplus}S_{k,l}$, and $V'$ is equal to 
$\underset{(k,l)\in \left\lbrace 1,\dots, t \right\rbrace^2}{\oplus}S'_{k,l}$, the $K$-linear isomorphism
 $h=\underset{(k,l)\in \left\lbrace 1,\dots, t \right\rbrace^2}{\oplus} h_{l,k}$ defines an element of $H$, sending $D$ to $D'$, so that $D$ and $D'$ are in the same $H$-orbit.
 \end{proof}

The proof of the previous theorem has as a consequence the following corollary.

\begin{cor}\label{classnumber}
The quotient $H\backslash G/P$ is a finite set, and its cardinality is equal to the number of sequences of positive or null integers $(n_{i,j})_{1\leq i \leq j \leq t}$, such that if we let $n_{j,i}$ be equal to $n_{j,i}$, then for $i$ between $1$ and $t$, one has $n_i = \sum_{j=1}^{t} n_{i,j}$. 
\end{cor}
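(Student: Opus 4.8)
The plan is to identify the set of $H$-orbits on the flag manifold $G/P$ with the set of sequences appearing in the statement, by reading off from each orbit the dimensions of the pieces $S_{i,j}$ constructed in the proof of Theorem~\ref{H-orbits}. Given an $\bar n$-flag $D = (0\subset V_1\subset\dots\subset V_{t-1}\subset V)$, extended by $V_0 = 0$ and $V_t = V$, set $n_{i,j}(D) = \dim S_{i,j}$ for $1\le i\le j\le t$ and $n_{j,i}(D) = n_{i,j}(D)$ by convention, consistently with the construction since $S_{j,i} = S_{i,j}^\sigma$. By Lemma~\ref{dec} one has $V_i = \bigoplus_{k\le i}\bigoplus_{l=1}^{t} S_{k,l}$, hence $n_1+\dots+n_i = \dim V_i = \sum_{k\le i}\sum_{l=1}^{t} n_{k,l}(D)$, and subtracting the identity for $i-1$ gives $n_i = \sum_{l=1}^{t} n_{i,l}(D)$. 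Thus $D\mapsto(n_{i,j}(D))_{1\le i\le j\le t}$ does land among the sequences of the statement, and it suffices to show it induces a bijection on $H$-orbits; finiteness is then clear, as each $n_{i,j}$ is an integer between $0$ and $n$, so there are only finitely many such sequences.

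First I would check that the assignment factors through $H$-orbits and is injective. If $D' = h\cdot D$ with $h\in H = G_n(F)$, then $V'_i = hV_i$ and, since $h$ has entries in $F$ and therefore commutes with $\sigma$ on $K^n$, also $(V'_j)^\sigma = h\,V_j^\sigma$; hence $\dim(V'_i\cap (V'_j)^\sigma) = \dim(V_i\cap V_j^\sigma)$ for all $i,j$. Because $S_{i,j}$ is a supplement of $V_i\cap V_{j-1}^\sigma + V_{i-1}\cap V_j^\sigma$ in $V_i\cap V_j^\sigma$, an inclusion–exclusion (using $V_{i-1}\subset V_i$ and $V_{j-1}^\sigma\subset V_j^\sigma$) gives
$$n_{i,j}(D) = \dim(V_i\cap V_j^\sigma) - \dim(V_{i-1}\cap V_j^\sigma) - \dim(V_i\cap V_{j-1}^\sigma) + \dim(V_{i-1}\cap V_{j-1}^\sigma),$$
so $n_{i,j}(D)$ depends only on the orbit of $D$; summing this relation over $k\le i$, $l\le j$ telescopes to $\dim(V_i\cap V_j^\sigma) = \sum_{k\le i,\,l\le j} n_{k,l}(D)$. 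Hence the sequence $(n_{i,j}(D))$ recovers all the integers $\dim(V_i\cap V_j^\sigma)$, and by Theorem~\ref{H-orbits} it recovers the orbit of $D$, which proves injectivity.

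The only genuinely new work is surjectivity, which I would handle by an explicit construction. Given a sequence $(n_{i,j})$ as in the statement, fix $\alpha\in K\setminus F$ and decompose $K^n$ as a direct sum of coordinate subspaces, one of dimension $2n_{i,j}$ for each pair $i<j$ and one of dimension $n_{i,i}$ for each $i$; this is possible because $\sum_{i<j}2n_{i,j}+\sum_i n_{i,i} = \sum_{i}\sum_{j}n_{i,j} = \sum_i n_i = n$. In the block attached to a pair $i<j$, with $\sigma$-fixed basis $f_1,\dots,f_{n_{i,j}},g_1,\dots,g_{n_{i,j}}$, take $S_{i,j}$ to be the span of the vectors $f_k+\alpha g_k$ and $S_{j,i}=S_{i,j}^\sigma$ the span of the $f_k+\alpha^\sigma g_k$; these are in direct sum and fill the block because $\alpha\neq\alpha^\sigma$. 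In the block attached to $i$, take $S_{i,i}$ to be the whole block, spanned by $\sigma$-fixed vectors. Putting $V_i = \bigoplus_{k\le i}\bigoplus_{l=1}^{t} S_{k,l}$ yields an $\bar n$-flag, since $\dim V_i = \sum_{k\le i}\sum_l n_{k,l} = n_1+\dots+n_i$; and from $V_j^\sigma = \bigoplus_{l\le j}\bigoplus_k S_{k,l}$ together with the independence of the $S_{k,l}$ one gets $V_i\cap V_j^\sigma = \bigoplus_{k\le i,\,l\le j} S_{k,l}$, so this flag has invariants $n_{i,j}(D) = n_{i,j}$ by the formula above. The main obstacle is precisely this construction: inside the $\sigma$-space $K^n$ one must produce, in prescribed common dimension, two $K$-subspaces $S$ and $S^\sigma$ that are in direct sum, which is exactly the role played by an element $\alpha$ of $K$ lying outside $F$.
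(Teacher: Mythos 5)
Your proof is correct and takes essentially the same route as the paper: you assign to each orbit the dimensions $\dim S_{i,j}$ of the supplements constructed in the proof of Theorem~\ref{H-orbits}, use Lemma~\ref{dec} to get the constraint $n_i=\sum_j n_{i,j}$, recover the invariants $\dim(V_i\cap V_j^\sigma)$ by inclusion–exclusion for injectivity, and prove surjectivity by an explicit block construction using an element of $K\setminus F$. The last step is exactly what the paper does in Proposition~\ref{HP} (there written as the matrix of $u'$ on $V^0_{\{i,j\}}$ involving $\delta$), so your argument simply makes explicit what the paper packages as ``a consequence of the proof of the previous theorem'' together with the subsequent proposition.
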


\begin{df}
We call $I(\bar{n})$ the set of sequences described in the preceding corollary. 
\end{df}

Now to such a sequence, we are going to associate an element of $G$, which will be a representative of the corresponding double coset of $H\backslash G/P$. 
This will thus achieve the description of the set $H\backslash G/P$.\\
 First we recall that we denote by $V$ the space $K^n$, and that $P$ corresponds to a partition $\bar{n}=(n_1,\dots, n_t)$ of $n$. 
We denote by $B^0=(e_1,\dots,e_n)$ the canonical basis of $V$, and by $D^0$ the canonical $\bar{n}$-flag defined over $F$, given by 
$0\subset V^0_1\subset V^0_2 \subset V^0_{t-1} \subset V$, with $V^0_i= Vect(e_1,\dots,e_{(n_1+\dots n_{i})})$, corresponding to the sequence 
$n_{i,j}=0$ if $i<j$ and $n_{i,i}=n_i$.

\begin{prop} \label{HP}{(Representatives for $H\backslash G/P$)}
Let $(n_{i,j})_{1\leq i \leq j \leq t}$ be an element of $I(\bar{n})$. We denote by $V^0_{i,j}$ the space 
$$Vect(e_{(n_1 +\dots +n_{i-1}+ n_{i,1}+\dots + n_{i,j-1} + 1)},\dots,e_{(n_1 +\dots +n_{i-1}+ n_{i,1}+\dots +n_{i,j-1} + n_{i,j})}) ,$$ and we denote by 
$B^0_{i,j}$ its canonical basis
 $$\left\lbrace e_{(n_1 +\dots +n_{i-1}+ n_{i,1}+\dots + n_{i,j-1} + 1)},\dots,e_{(n_1 +\dots +n_{i-1}+ n_{i,1}+\dots +n_{i,j-1} + n_{i,j})}\right\rbrace.$$
  Hence one has $$V= (V^0_{1,1}\oplus \dots \oplus V^0_{1,t})\oplus (V^0_{2,1} \oplus \dots \oplus V^0_{2,t}) \oplus \dots \oplus (V^0_{t,1}\oplus \dots \oplus V^0_{t,t}).$$ 
We denote by $u'$ the element of $G$ sending $V^0_{i,i}$ onto itself, and $V^0_{i,j}\oplus V^0_{j,i}$ onto itself for $i < j$, whose restriction to 
$V^0_{i,i}$ has matrix $I_{n_{i,i}}$ in the basis $B^0_{i,i}$, and whose restriction to  $V^0_{\{ i,j \}}=V^0_{i,j}\oplus V^0_{j,i}$ has matrix 
$$\left( \begin{array}{cc} \frac{1}{2} I_{n_{i,j}} & \frac{1}{2} I_{n_{i,j}}\\
 -\frac{1}{2\delta} I_{n_{i,j}} & \frac{1}{2\delta} I_{n_{i,j}} \end{array} \right)$$ in the basis $B^0_{\{ i,j \}}= B^0_{i,j} \cup B^0_{j,i}$.\\
 The element $u'$ is a representative of the double coset of $H\backslash G/P$ associated with $(n_{i,j})_{1\leq i \leq j \leq t}$ in $I(\bar{n})$.
\end{prop}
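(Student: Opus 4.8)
\quad The plan is to check that $u'$ belongs to $G$ and then to identify the $H$-orbit of the $\bar{n}$-flag it determines by computing the invariants of Theorem \ref{H-orbits}. First I would note that $u'$ is invertible: with respect to the decomposition $V=\bigl(\bigoplus_{i}V^0_{i,i}\bigr)\oplus\bigl(\bigoplus_{i<j}V^0_{\{i,j\}}\bigr)$ it is block diagonal, equal to $I_{n_{i,i}}$ on each $V^0_{i,i}$ and, on each $V^0_{\{i,j\}}$, to a matrix of determinant $(2\delta)^{-n_{i,j}}\neq 0$ (here $2$ is invertible since $\mathrm{char}\,F=0$ and $\delta\neq 0$). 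Hence $u'\in G$, and, setting $L_{k,l}:=u'(V^0_{k,l})$, the family $(L_{k,l})_{1\le k,l\le t}$ is again a direct-sum decomposition of $V$, with $\dim L_{k,l}=n_{k,l}$.

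Under the identification of $G/P$ with the flag manifold, the coset $u'P$ corresponds to $u'D^0$, i.e. to the flag $0\subset W_1\subset\dots\subset W_{t-1}\subset V$ with $W_i=u'(V^0_i)$; since $V^0_i=\bigoplus_{k\le i}\bigoplus_{l=1}^{t}V^0_{k,l}$ we get $W_i=\bigoplus_{k\le i}\bigoplus_{l}L_{k,l}$. The crucial point — the one place where the particular shape of the $2\times 2$ block is used, and the step I expect to be the main obstacle — is the identity
\[
L_{k,l}^{\sigma}=L_{l,k}\ \ (k\neq l),\qquad L_{k,k}=V^0_{k,k}\ \text{defined over }F .
\]
Since the canonical basis lies in $V_F$, conjugation by $\sigma$ acts entrywise on $\mathrm{End}(V^0_{\{i,j\}})$, hence sends $\delta$ to $-\delta$; and this is exactly what is needed for $\sigma$ to interchange the spans of the two block-columns of the matrix $\left(\begin{smallmatrix}\frac12 I_{n_{i,j}}&\frac12 I_{n_{i,j}}\\ -\frac1{2\delta}I_{n_{i,j}}&\frac1{2\delta}I_{n_{i,j}}\end{smallmatrix}\right)$ inside $V^0_{\{i,j\}}=V^0_{i,j}\oplus V^0_{j,i}$, i.e. ${u'}^{\sigma}(V^0_{i,j})=u'(V^0_{j,i})$, which reads $L_{i,j}^{\sigma}=L_{j,i}$. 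Granting this, $W_j^{\sigma}=\bigoplus_{k\le j}\bigoplus_{l}L_{k,l}^{\sigma}=\bigoplus_{b\le j}\bigoplus_{a}L_{a,b}$, and because the $L_{a,b}$ are in direct sum, $W_i\cap W_j^{\sigma}=\bigoplus_{a\le i,\,b\le j}L_{a,b}$; hence $\dim(W_i\cap W_j^{\sigma})=\sum_{a\le i,\,b\le j}n_{a,b}$ for all $1\le i\le j\le t-1$.

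It then remains to recognize this as the invariant of the double coset indexed by $(n_{i,j})$. Recall from Corollary \ref{classnumber} that the orbit of a $\bar{n}$-flag $D$ is the one indexed by the sequence $(\dim S_{i,j})$ built from $D$; and Lemma \ref{dec}, which gives $V_{i-1}+V_i\cap V_j^{\sigma}=\bigoplus_{k<i}\bigoplus_{l}S_{k,l}\oplus\bigoplus_{l\le j}S_{i,l}$, combined with the $\sigma$-image of the analogous identity for $V_j\cap V_i^{\sigma}$ (using $S_{k,l}^{\sigma}=S_{l,k}$) and a comparison of coordinate subspaces, yields $V_i\cap V_j^{\sigma}=\bigoplus_{a\le i,\,b\le j}S_{a,b}$, so that $\dim(V_i\cap V_j^{\sigma})=\sum_{a\le i,\,b\le j}\dim S_{a,b}$. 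Thus $u'D^0$ has the same invariants $\dim(V_i\cap V_j^{\sigma})=\sum_{a\le i,\,b\le j}n_{a,b}$ as any flag in the orbit indexed by $(n_{i,j})$, and Theorem \ref{H-orbits} concludes. (Alternatively, and without using the combinatorics of Corollary \ref{classnumber}, one checks directly that for $u'D^0$ the spaces $L_{i,j}$ themselves serve as admissible choices of the $S_{i,j}$ — the diagonal ones $L_{i,i}=V^0_{i,i}$ being defined over $F$ — so that $\dim S_{i,j}=n_{i,j}$ by construction.) Everything past the block-column computation is bookkeeping with direct sums, the relevant inclusion--exclusion being already packaged in Lemma \ref{dec}.
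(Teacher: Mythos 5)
Your proposal is correct and takes essentially the same approach as the paper: the pivotal computation in both is that $\sigma$ sends $u'(V^0_{i,j})$ to $u'(V^0_{j,i})$ (i.e.\ $L_{i,j}^{\sigma}=L_{j,i}$), and your parenthetical remark at the end --- that the $L_{i,j}$ themselves serve as admissible choices of the $S_{i,j}$ from the proof of Theorem~\ref{H-orbits} --- is precisely the argument the paper gives, while your main line (matching the invariants $\dim(W_i\cap W_j^{\sigma})$) is a logically equivalent dimension-count. One small point worth making explicit: the implication ``$\sigma$ acts entrywise, hence sends $\delta$ to $-\delta$'' reads as a deduction, but $\delta^{\sigma}=-\delta$ is really a choice --- $\delta$ must be taken to be a generator of $K$ over $F$ with $\delta^{2}\in F^{*}$, a convention the paper leaves implicit as well.
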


\begin{proof} If $B_1=(v_i)$ and $B_2=(w_i)$ are two families of vectors of same finite cardinality in $V$, we denote by $\lambda B_1 + \mu B_2$ the family 
$(\lambda v_i + \mu w_i)$ for $\lambda$ and $\mu$ in $K$. With these notations, the element $u'$ sends $V^0_{i,j}$ onto $S_{i,j}=Vect(B^0_{i,j} -\frac{1}{2\delta} B^0_{j,i})$,
 and $V^0_{j,i}$ onto $S_{j,i}=S_{i,j}^\sigma$. Denoting $V^0_{i,i}$ by $S_{i,i}$ and one verifies from our choices that for $1\leq i\leq t$, one has 
$V_i=u(V^0_i)=(S_{1,1}\oplus \ldots \oplus S_{1,t})\oplus \ldots \oplus (S_{i,1}\oplus \ldots \oplus S_{i,t})$, and that $S_{i,j}$ is a supplementary space of 
$V_i \cap V_{j-1}^{\sigma}+ V_{i-1} \cap V_j^{\sigma}$ in $V_i \cap V_j^{\sigma}$. Hence the $\bar{n}$-flag $D$ corresponds to the sequence 
$(n_{i,j})_{1\leq i \leq j \leq t}$ of $I(\bar{n})$. 
 \end{proof}

A reformulation of what precedes is the following.

\begin{prop}\label{PH}{(Representatives for $P\backslash G/H$)}
A set of representatives of $P\backslash G/H$ is given by the elements $u=u'^{-1}$, where the $u'$ are as in Proposition \ref{HP}. The representative of the class associated 
with the sequence $(n_{i,j})_{1\leq i \leq j \leq t}$ in $I(\bar{n})$, restricts to $V^0_{i,i}$ with matrix $I_{n_{i,i}}$ in the basis $B^0_{i,i}$, and to $V^0_{\{ i,j \}}$ 
with matrix $$\left( \begin{array}{cc}  I_{n_{i,j}} &  -\delta I_{n_{i,j}}\\
  I_{n_{i,j}} & \delta I_{n_{i,j}} \end{array} \right)$$ in the basis $B^0_{\{ i,j \}}$.  
\end{prop}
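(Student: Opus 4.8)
The plan is to derive this statement as a purely formal consequence of Proposition \ref{HP}, so that essentially no new work is needed. The key observation is that $P\backslash G/H$ and $H\backslash G/P$ are in natural bijection via $PgH \mapsto Hg^{-1}P$, since $H = G_n(F)$ is stable under $g \mapsto g^{-1}$ (it is a group) and $P$ likewise. Hence if $\{u'\}$ is a complete set of representatives for $H\backslash G/P$ as produced in Proposition \ref{HP}, then $\{u'^{-1}\}$ is automatically a complete set of representatives for $P\backslash G/H$, and the representative attached to a given sequence $(n_{i,j})$ is just the inverse of the corresponding $u'$. So the only thing left to check is the explicit matrix description.

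For that, I would simply invert the block matrices appearing in Proposition \ref{HP}. The map $u'$ is block-diagonal with respect to the decomposition $V = \bigoplus_i V^0_{i,i} \oplus \bigoplus_{i<j} V^0_{\{i,j\}}$, so its inverse is the block-diagonal map whose blocks are the inverses of the blocks of $u'$. On $V^0_{i,i}$ the block is $I_{n_{i,i}}$, whose inverse is again $I_{n_{i,i}}$. On $V^0_{\{i,j\}}$ the block is
$$\left( \begin{array}{cc} \frac{1}{2} I_{n_{i,j}} & \frac{1}{2} I_{n_{i,j}}\\ -\frac{1}{2\delta} I_{n_{i,j}} & \frac{1}{2\delta} I_{n_{i,j}} \end{array} \right),$$
and a direct computation (this is just inverting a $2\times 2$ scalar matrix, tensored with $I_{n_{i,j}}$) gives inverse
$$\left( \begin{array}{cc} I_{n_{i,j}} & -\delta I_{n_{i,j}}\\ I_{n_{i,j}} & \delta I_{n_{i,j}} \end{array} \right),$$
which is exactly the matrix claimed in the statement, expressed in the basis $B^0_{\{i,j\}}$. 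One checks the product in both orders equals $I_{2n_{i,j}}$; this is immediate.

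I do not anticipate a genuine obstacle here: the statement is explicitly flagged in the text as "a reformulation of what precedes," and the content is the bijection $PgH \leftrightarrow Hg^{-1}P$ together with an elementary matrix inversion. The only point requiring a word of care is bookkeeping with the basis ordering — since $u'^{-1}$ acts on the same space, one must be sure that conjugating/inverting does not permute the roles of $B^0_{i,j}$ and $B^0_{j,i}$ within $B^0_{\{i,j\}}$; but the block is written in the fixed ordered basis $B^0_{\{i,j\}} = B^0_{i,j} \cup B^0_{j,i}$, so the inverse is taken in that same ordered basis and no relabeling occurs. Thus the proof is one line of conceptual argument plus one $2\times 2$ inversion.
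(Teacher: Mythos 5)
Your proposal is correct and matches the paper's intent exactly: the paper itself prefaces the proposition with ``A reformulation of what precedes is the following'' and gives no further argument, so the content really is the bijection $PgH \leftrightarrow Hg^{-1}P$ (well-defined and bijective since $H$ and $P$ are groups) combined with inverting the $2\times 2$ block $\begin{pmatrix} \tfrac12 & \tfrac12 \\ -\tfrac1{2\delta} & \tfrac1{2\delta}\end{pmatrix}\otimes I_{n_{i,j}}$, which a direct check confirms is $\begin{pmatrix} 1 & -\delta \\ 1 & \delta\end{pmatrix}\otimes I_{n_{i,j}}$. Your note about basis bookkeeping is the right thing to say aloud and is resolved correctly; nothing is missing.
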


\begin{df}
 We denote by $R(P\backslash G/H)$ the set of representatives described in Proposition \ref{PH}.
\end{df}

Each element in this set of representatives has the following property, which is immediate to check:

\begin{prop}\label{admissible}
If the element $u$ of $R(P\backslash G/H)$ corresponds to $s=(n_{i,j})_{1\leq i \leq j \leq t}$, we write the set $\{ 1,\dots,n\}$ as the ordered disjoint union of 
intervals $I_{1,1} \cup I_{1,2} \cup \dots \cup I_{t,t-1}\cup I_{t,t}$, with $I_{i,j}$ of cardinality $n_{i,j}$. Then the element $w=uu^{-\sigma}$ is the involution 
of $\mathfrak{S}_n$ which fixes the intervals $I_{i,i}$, and exchanges the intervals $I_{i,j}$ and $I_{j,i}$ for different $i$ and $j$.

Moreover $u$ is $P_s$-admissible, i.e. the Levi subgroup $M_s$ is normalized by $w$. 
\end{prop}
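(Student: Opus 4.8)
The plan is to verify the two assertions by direct computation, using the explicit description of $u \in R(P\backslash G/H)$ provided by Proposition \ref{PH}. First I would set up coordinates: the ordered disjoint union $\{1,\dots,n\} = I_{1,1}\cup I_{1,2}\cup\dots\cup I_{t,t}$ matches the block decomposition of $V$ into the spaces $V^0_{i,j}$, where $I_{i,j}$ indexes the basis vectors $B^0_{i,j}$ and has cardinality $n_{i,j}$. The element $u$ is block-diagonal with respect to the coarser decomposition $V = \bigoplus_{i} V^0_{i,i} \oplus \bigoplus_{i<j} V^0_{\{i,j\}}$: on each $V^0_{i,i}$ it is the identity, and on each $V^0_{\{i,j\}}$ it is the $2\times 2$ block matrix with entries $I_{n_{i,j}}, -\delta I_{n_{i,j}}, I_{n_{i,j}}, \delta I_{n_{i,j}}$ in the basis $B^0_{i,j}\cup B^0_{j,i}$. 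Here $\delta$ is the fixed element of $K$ with $\delta^\sigma = -\delta$ (so that $K = F \oplus F\delta$), hence $\delta^{-\sigma} = -\delta^{-1}$.

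The first step is to compute $w = uu^{-\sigma}$ blockwise. On $V^0_{i,i}$ both $u$ and $u^\sigma$ restrict to the identity (these spaces are defined over $F$, i.e. spanned by standard basis vectors which are $\sigma$-fixed), so $w$ fixes $V^0_{i,i}$ pointwise, in particular fixes the interval $I_{i,i}$. On $V^0_{\{i,j\}}$ with $i<j$, one applies $\sigma$ entrywise to the $2\times 2$ block matrix of $u$: since $\delta^\sigma = -\delta$, the matrix of $u^\sigma$ on $V^0_{\{i,j\}}$ is obtained by replacing $\delta$ with $-\delta$, i.e. it has entries $I, \delta I, I, -\delta I$. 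Inverting this and multiplying by the matrix of $u$ — a routine $2\times 2$ calculation over the ring $\mathrm{M}_{n_{i,j}}(K)$, where all blocks are scalar multiples of $I_{n_{i,j}}$ and hence commute — I expect to obtain precisely the block permutation matrix $\begin{pmatrix} 0 & I_{n_{i,j}}\\ I_{n_{i,j}} & 0\end{pmatrix}$, which swaps $B^0_{i,j}$ and $B^0_{j,i}$, hence exchanges the intervals $I_{i,j}$ and $I_{j,i}$. This establishes that $w$ is the stated involution in $\mathfrak{S}_n$.

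For the second assertion, recall that $P_s$ is the standard parabolic whose Levi $M_s$ is the block-diagonal group indexed by the ordered partition $s=(n_{1,1},n_{1,2},\dots,n_{t,t})$ of $n$; concretely $M_s = \prod_{i,j} G_{n_{i,j}}$ embedded diagonally along the intervals $I_{i,j}$. Since $w$, viewed as a permutation of $\{1,\dots,n\}$, maps each interval $I_{i,j}$ onto the interval $I_{j,i}$ of equal cardinality (order-preservingly, block by block), conjugation by $w$ permutes the diagonal blocks of $M_s$ among themselves — sending the $G_{n_{i,j}}$-factor to the $G_{n_{j,i}}$-factor — and therefore normalizes $M_s$. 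By the definition of $P_s$-admissibility recalled from \cite{JLR} (Definition 1 of Section 6), this is exactly the statement that $u$ is $P_s$-admissible.

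The only genuinely delicate point is the bookkeeping in the blockwise computation of $w=uu^{-\sigma}$: one must be careful that applying $\sigma$ to $u$ means applying $\sigma$ to its matrix entries \emph{relative to the $\sigma$-stable basis} $B^0$, and that $u$ is only block-diagonal for the decomposition grouping $V^0_{i,j}$ with $V^0_{j,i}$, not for the finer one. Once the $2\times 2$ model is correctly isolated, the arithmetic is immediate because $\delta^\sigma=-\delta$ forces the two matrices to differ only in the sign of the off-diagonal $\delta$-entries, and their product simplifies to a bare transposition. Everything else is formal.
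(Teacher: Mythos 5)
Your proposal is correct: the blockwise computation of $w=uu^{-\sigma}$ on the two-block spaces $V^0_{\{i,j\}}$ does yield $\begin{pmatrix}0 & I_{n_{i,j}}\\ I_{n_{i,j}} & 0\end{pmatrix}$, and the conclusion that $w$ permutes the $n_{i,j}$-blocks of $M_s$ (hence normalizes it) follows exactly as you say. The paper states the proposition is ``immediate to check'' and gives no proof, and your direct verification is precisely the intended argument.
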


\begin{rem}\label{JLR}
To see that we obtain the same set as in \cite{JLR}, according to Proposition 1.1.3 of \cite{C}, it is enough to check the inclusion $w(\Phi_M^+)\subset \Phi^+$. 
For $\alpha$ in $\Phi_M^+$, two cases occur: either $\alpha$ belongs to $\Phi_s^+$, i.e. $Lie(N_\alpha)$ lies in a diagonal block $n_{i,j}\times n_{i,j}$, 
which is exchanged by $w$ with the diagonal block $n_{j,i}\times n_{j,i}$ without changing the coefficients inside the block, hence $w(\alpha)$ belongs to $\Phi_s^+$, 
otherwise $Lie(N_\alpha)$ lies in a block $n_{i,j}\times n_{i,k}$ with $j<k$, which is sent by $w$ to the sub-block $n_{j,i}\times n_{k,i}$ of the block 
$n_{j}\times n_{k}$, hence $w(\alpha)$ belongs to $\Phi^+ -\Phi_M^+$ in this case.\\
Another consequence of the preceding discussion is that $M_s$ is actually $M \cap M^w$ (so this is how $s$ would appear in \cite{JLR}).
\end{rem} 

\section{Structure of the group $P(K)\cap uG_n(F)u^{-1}$ and modulus characters}\label{structure}

Let $u$ be an element of $R(P\backslash G/H)$, corresponding to a sequence $s=(n_{i,j})_{1\leq i \leq j \leq t}$ in $I(\bar{n})$. We want to analyze the structure of the group 
$P\cap uHu^{-1}$. If $u$ is $P$-admissible (which is if $s$ equals $(n_1,\dots,n_t)$), it is done in \cite{JLR}.
 However, the group $P\cap uHu^{-1}$ is actually equal to $P_s\cap uHu^{-1}$, we then refer to \cite{JLR}, as $u$ is $P_s$-admissible.

\begin{prop}
 Let $u$ be an element of $R(P\backslash G/P)$, corresponding to a sequence $s=(n_{i,j})_{1\leq i \leq j \leq t}$ in $I(\bar{n})$, 
the group $P\cap uHu^{-1}$ is equal to $P_s\cap uHu^{-1}$.
\end{prop}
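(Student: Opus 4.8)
The plan is to show both inclusions, the nontrivial one being $P \cap uHu^{-1} \subseteq P_s \cap uHu^{-1}$, since $P_s \subseteq P$ gives the reverse inclusion immediately. The key point is that conjugation by $u$ of the $F$-structure (i.e. the involution $x \mapsto ux^{\sigma}u^{-1}$ on $G$) has a very explicit description via the Galois action twisted by $w = uu^{-\sigma}$, which by Proposition \ref{admissible} is the involution of $\mathfrak{S}_n$ permuting the index intervals $I_{i,j}$. Concretely, an element $g \in G$ lies in $uHu^{-1}$ if and only if $u^{-1}gu$ is fixed by $\sigma$, equivalently $g = w \cdot {}^{\sigma}\!g \cdot w^{-1}$ after unwinding (using that $u$ is essentially block-diagonal with the explicit $2\times 2$ blocks from Proposition \ref{PH}, whose only effect is to conjugate the Galois action by $w$ on the relevant coordinates). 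So I would first record this reformulation: $uHu^{-1} = \{ g \in G : {}^{\tau}\!g = g \}$ where $\tau$ is the involutive automorphism $g \mapsto w\, {}^{\sigma}\!g\, w^{-1}$.

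Next, I would take $g \in P \cap uHu^{-1}$ and show $g \in P_s$. Write $g = mn$ with $m \in M$ (block upper-triangular with blocks $g_1,\dots,g_t$) and $n \in N$ according to the partition $\bar n$. The condition $g = \tau(g)$ together with $\tau$ preserving $P$ (because $w$ normalizes $M_s \subseteq M$ and, by the computation in Remark \ref{JLR}, $w(\Phi_M^+) \subseteq \Phi^+$, so $w$ sends $P$-positive root spaces to $P$-positive ones — wait, more precisely $\tau$ stabilizes $P$ since $w$ is $P_s$-admissible and $P_s \subseteq P$) forces the ``Levi part'' of $g$ to be $\tau$-fixed as well. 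Then within each diagonal block $G_{n_i}$, the element $g_i$ must be fixed by the restricted involution, which by the interval description of $w$ acts by swapping the sub-blocks indexed by $I_{i,j}$ and $I_{i,k}$ — but these sit in a \emph{single} diagonal block $G_{n_i}$, and $w$ fixes each $I_{i,i}$ and permutes $\{I_{i,j}\}_j$ among themselves only by the transposition-type structure coming from pairs $\{i,j\}$. The upshot is that $\tau$-fixedness of $g_i$, combined with the off-diagonal blocks $n_{i,j} \times n_{i,k}$ ($j \ne k$) being swapped by $w$ with off-diagonal blocks lying \emph{below} the $P_s$-diagonal, forces those would-be off-diagonal (within-$G_{n_i}$) entries of $g$ to vanish, i.e. $g$ is block upper-triangular for the finer partition $s$. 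Hence $g \in P_s$.

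I expect the main obstacle to be bookkeeping: making the identification $uHu^{-1} = \mathrm{Fix}(\tau)$ genuinely clean requires carefully tracking how the explicit $2 \times 2$ blocks $\left(\begin{smallmatrix} I & -\delta I \\ I & \delta I \end{smallmatrix}\right)$ of Proposition \ref{PH} interact with $\sigma$ (here $\delta$ should satisfy $\delta^{\sigma} = -\delta$, generating $K/F$), and then arguing that on each pair-block $V^0_{\{i,j\}}$ the twisted involution is exactly the swap of the two $I_{i,j}$, $I_{j,i}$ coordinate subspaces. Once that is in place, the argument that $\tau$-invariance plus membership in $P$ forces membership in $P_s$ is a direct blockwise inspection — essentially the observation already made in Remark \ref{JLR} that $w$ moves the ``extra'' positive roots of $M$ relative to $M_s$ outside of $\Phi_M^+$, so no $\tau$-fixed element of $P$ can have nonzero components along them. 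I would present the reformulation as a short lemma and then give the blockwise check as the body of the proof.
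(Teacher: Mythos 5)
Your reformulation of $uHu^{-1}$ as the fixed points of the twisted Galois involution $\tau(g)=w\,g^{\sigma}w^{-1}$ is exactly the paper's starting point (the paper writes $\theta$; since $w$ is an involution the two conventions coincide), and you have correctly identified that the pivotal ingredient is the observation from Remark \ref{JLR} that $w$ sends $\Phi_M^{\pm}\setminus\Phi_s^{\pm}$ outside $\Phi_M$. However, the middle of your argument has a real gap.

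The assertion that ``$\tau$ stabilizes $P$'' is false, and the decomposition-of-the-Levi-part step built on it does not go through. Being $P_s$-admissible means only that $w$ normalizes $M_s=M\cap M^w$; it does not normalize $M$ or $P$. For instance, with $n=3$, $\bar n=(2,1)$ and $s$ given by $n_{1,1}=n_{1,2}=n_{2,1}=1$, $n_{2,2}=0$, the involution $w$ is the transposition $(2\,3)$, which sends the root $e_2-e_3\in\Phi^+\setminus\Phi_M^+$ to $e_3-e_2\in\Phi^-$, so $w$ does not preserve $P_{(2,1)}$. Consequently, for $g=mn\in P$ with $\tau(g)=g$ you cannot conclude that $m$ is $\tau$-fixed, because $\tau(m)$ need not lie in $M$ and $\tau(n)$ need not lie in $N$. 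The ``blockwise inspection'' as you describe it therefore never actually closes: you state that the extra blocks get sent somewhere below the $P_s$-diagonal, but without an honest decomposition of $g$ compatible with $\tau$ this does not by itself kill those components at the group level.

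The missing idea is the paper's reduction. First one observes the stronger inclusion to prove is $P\cap P^w\subset P_s$: since $g$ and $g^\sigma$ both lie in $P$, $\tau$-fixedness of $g$ already gives $g\in P\cap P^w$, and this statement involves only $w$, not $\sigma$. Second, because we are in $GL_n$, a standard parabolic is precisely the group of invertible elements of its associative Lie algebra of block upper-triangular matrices; hence $P\cap P^w$ consists of the invertible elements of $Lie(P)\cap Lie(P)^w$, and it suffices to prove $Lie(P)\cap Lie(P)^w\subset Lie(P_s)$. Since $w\in W$, everything decomposes as a direct sum of root spaces, and the inclusion reduces to: for $\alpha\in\Phi_M^-\setminus\Phi_s^-$ (the root spaces of $Lie(P)$ not in $Lie(P_s)$), $w(\alpha)\in\Phi^-\setminus\Phi_M^-$, so $Lie(N_{w(\alpha)})\not\subset Lie(P)$ and hence $Lie(N_\alpha)\not\subset Lie(P)^w$. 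This is exactly the fact from Remark \ref{JLR} you invoke, but it must be applied to the negative roots $\Phi_M^-\setminus\Phi_s^-$ (whose $w$-images leave $P$ entirely), not to $\Phi_M^+\setminus\Phi_s^+$ (whose $w$-images remain in $\Phi^+$ and hence in $P$). Your plan cites the right remark but applies the wrong half of it and tries to work at the group level where the decomposition you need is unavailable.
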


\begin{proof}
Denoting by $w$ the element $uu^{-\sigma}$, the group $uHu^{-1}$ is the subgroup of $G$ fixed by the involution $\theta: x \mapsto w^{-1} x^\sigma w$, we denote it by
$G^{<\theta>}$, and we more generally denote by $S ^{<\theta>}$ the fixed points of $\theta$ in $S$, for any subset $S$ of $G$. Hence we need to show the equality
$P^{<\theta>}= P_s^{<\theta>}$, which is equivalent to the inclusion $P^{<\theta>} \subset P_s$. We actually show the stronger inclusion 
$P \cap P^w \subset P_s$ (here $P^w$ denotes $w^{-1}Pw$ which is equal to $\theta(P)$ as $\sigma$ fixes $P$). As we are dealing with $GL(n)$, it is enough to show the inclusion
 of Lie algebras 
$Lie(P) \cap Lie(P)^w \subset Lie(P_s)$ as the groups in question are the invertible elements of their Lie algebra. But then 
$Lie(P)=Lie(N_s^-) \oplus Lie(P_s)$ and $Lie(P) \cap Lie(P)^w= Lie(N_s^-) \cap Lie(P)^w \oplus Lie(P_s) \cap  Lie(P)^w$, because as $w$ belongs to $W$, all of 
these algebras decompose as a direct sum of subspaces $Lie(N_{\alpha})$ for some $\alpha$ in $\Phi$ and maybe of the Lie algebra $Lie(T)$.\\ 
We show that $Lie(N_s^-) \cap Lie(P)^w$ is zero by showing that $Lie(N_s^-)^w \cap Lie(P)$ is zero, as $w$ is an involution.
But if $\alpha$ is such that we have
$Lie(N_{\alpha}) \subset Lie(N_s^-)$, then $\alpha$ belongs to $\Phi_M^- -\Phi_s^-$. As $\Phi_s=\Phi_{M\cap M^w}$, we must have $w(\alpha)$ outside $\Phi_{M}$,
 but because of Remark \ref{JLR}, we know that $w(\Phi_M^-) \subset \Phi^-$, hence $w(\alpha)$ lies in $\Phi^- {-\Phi_M^-}$.

\end{proof}

Now Lemma 21 of \cite{JLR} applies, and we deduce:  

\begin{prop}\label{decomposition}
 Let $u$ be an element of $R(P\backslash G/H)$, corresponding to a sequence $s=(n_{i,j})_{1\leq i \leq j \leq t}$ in $I(\bar{n})$, and let $w$ be the involution 
$uu^{-\sigma}$ of the Weyl group of $G$. We denote by $\theta$ the involution $x \mapsto w^{-1} x^\sigma w$ of $G$. 
 The group $P^{<\theta>}= P\cap uHu^{-1}$ is the semi-direct
 product of $M_s^{<\theta>}=M_s \cap uHu^{-1}$ and $N_s^{<\theta>}=N_s \cap uHu^{-1}$.\\
The group $M_s^{<\theta>}$ is given by the matrices 
$$\begin{bmatrix} 
A_{1,1} &        &        &       &       &        &        &       &        \\
        & \ddots &        &       &       &        &        &       &        \\
        &        & A_{1,t}&       &       &        &        &       &        \\
        &        &        &A_{2,1}&       &        &        &       &        \\
        &        &        &       &\ddots &        &        &       &        \\
        &        &        &       &       & A_{2,t}&        &       &        \\
        &        &        &       &       &        & A_{t,1}&       &        \\
        &        &        &       &       &        &        &\ddots &        \\
        &        &        &       &       &        &        &       &  A_{t,t} 
  \end{bmatrix}$$ with $A_{j,i}=A_{i,j}^{\sigma}$ in $M_{n_{i,j}}(K)$.\\ 

\end{prop}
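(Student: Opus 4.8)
The plan is to reduce everything to the structure theory of \cite{JLR} via the previous proposition, so that the only work left is to make the description of $M_s^{<\theta>}$ explicit from the shape of $w$ recorded in Proposition \ref{admissible}. First I would invoke the previous proposition to replace $P\cap uHu^{-1}$ by $P_s\cap uHu^{-1}$, noting that $u$ is $P_s$-admissible (Proposition \ref{admissible}), i.e. $w=uu^{-\sigma}$ normalizes $M_s$. This is exactly the hypothesis under which Lemma~21 of \cite{JLR} gives the semidirect product decomposition $P_s^{<\theta>}=M_s^{<\theta>}\ltimes N_s^{<\theta>}$; since $P_s^{<\theta>}=P^{<\theta>}$ and $\theta$ stabilizes $P_s$, $M_s$ and $N_s$ (because $\sigma$ and conjugation by $w\in N_G(M_s)$ both do), the decomposition descends to fixed points, giving the first assertion.

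Next I would identify $M_s^{<\theta>}$ concretely. Write $\{1,\dots,n\}$ as the ordered disjoint union of intervals $I_{1,1}\cup I_{1,2}\cup\dots\cup I_{t,t}$ with $|I_{i,j}|=n_{i,j}$, so that $M_s$ consists of block-diagonal matrices with a block $A_{i,j}\in G_{n_{i,j}}(K)$ on each $I_{i,j}$. By Proposition \ref{admissible}, $w$ is the permutation matrix (times nothing else, since these are the representatives chosen) that fixes each $I_{i,i}$ pointwise and swaps $I_{i,j}\leftrightarrow I_{j,i}$ for $i\neq j$ block-wise (i.e. it identifies the two intervals by their natural increasing bijection). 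An element $m=\mathrm{diag}(A_{i,j})$ of $M_s$ lies in $uHu^{-1}$ iff $\theta(m)=m$, i.e. $w^{-1}m^\sigma w=m$; applying $\sigma$ entrywise to $m$ gives $\mathrm{diag}(A_{i,j}^\sigma)$, and conjugating by $w$ permutes the blocks according to $(i,j)\mapsto(j,i)$, so $w^{-1}m^\sigma w=\mathrm{diag}(B_{i,j})$ with $B_{i,j}=A_{j,i}^\sigma$ on $I_{i,j}$. Hence $\theta(m)=m$ is equivalent to $A_{i,j}=A_{j,i}^\sigma$ for all $i,j$; in particular for $i=j$ this says $A_{i,i}\in G_{n_{i,i}}(F)$, and for $i\neq j$ the pair $(A_{i,j},A_{j,i})$ is determined by $A_{i,j}\in G_{n_{i,j}}(K)$ alone. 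This is precisely the matrix description in the statement.

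The one point requiring a little care — and the step I expect to be the main obstacle — is pinning down the exact form of $w$ as a \emph{permutation} (as opposed to a more general monomial matrix conjugating $M_s$ to itself) and checking that conjugation by $w$ on $M_s$ acts on the blocks in the clean way described above, with the internal coordinates of $I_{i,j}$ matched to those of $I_{j,i}$ by the increasing bijection. This is forced by the explicit shape of the representative $u$ in Proposition \ref{PH}: on each $V^0_{\{i,j\}}$ the matrix of $u$ in the basis $B^0_{\{i,j\}}$ is $\left(\begin{smallmatrix} I & -\delta I\\ I & \delta I\end{smallmatrix}\right)$, so $u^\sigma$ has matrix $\left(\begin{smallmatrix} I & -\delta^\sigma I\\ I & \delta^\sigma I\end{smallmatrix}\right)$ there (with $\delta^\sigma=-\delta$), and a direct computation of $uu^{-\sigma}$ on that block gives the block-transposition $\left(\begin{smallmatrix} 0 & I\\ I & 0\end{smallmatrix}\right)$ up to signs that are absorbed because they act by a central scalar on each $G_{n_{i,j}}(K)$-block and thus do not affect the fixed-point condition. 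Once this block-level computation is carried out on each $V^0_{\{i,j\}}$ and one notes $w$ acts trivially on each $V^0_{i,i}$, the rest is the bookkeeping already indicated, and the proposition follows. I would present this $w$-computation as the technical heart and leave the induction-free verification of $A_{i,j}=A_{j,i}^\sigma$ as a routine unwinding of $\theta(m)=m$.
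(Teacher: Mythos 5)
Your proof is correct and follows the same route as the paper: reduce to $P_s\cap uHu^{-1}$ via the preceding proposition, note $P_s$-admissibility, and invoke Lemma~21 of \cite{JLR}. The only difference is that you re-derive the shape of $w$ by hand, but this is already the content of Proposition~\ref{admissible}, so that part of your argument is a useful cross-check rather than a new obstacle (and the block computation does in fact give $\left(\begin{smallmatrix}0&I\\I&0\end{smallmatrix}\right)$ exactly, with no stray signs to absorb).
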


We will use the following fact later.

\begin{prop}\label{util}

If we denote by $P'_s$ the standard parabolic subgroup of $M$ associated with the partition of $n$ corresponding to the sequence $s=(n_{i,j})$ in $I(\bar{n})$, 
by $u$ the element of $R(P\backslash G/H)$ corresponding to $s$, and by $N'_s$ the unipotent radical of $P'_s$, then the following inclusion is true:
$$N'_s \subset N_s^{<\theta>}N.$$
\end{prop}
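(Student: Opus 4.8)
The goal is the inclusion $N'_s \subset N_s^{<\theta>}N$, where $P'_s$ is the standard parabolic of the Levi $M$ determined by the finer partition $(n_{i,j})$, and $N'_s$ is its unipotent radical. The plan is to work entirely at the level of root subgroups. First I would note that both $N_s$ and $N'_s$ are directed products of root subgroups $N_\alpha$ with $\alpha$ ranging over suitable subsets of $\Phi^+$: concretely, $N'_s$ consists of the $N_\alpha$ with $\alpha \in \Phi_M^+ \setminus \Phi_s^+$ (the ``off-diagonal within a block $n_i$'' roots), while $N_s = N_{\bar n^{(s)}}$ is generated by the $N_\alpha$ with $\alpha \in \Phi^+ \setminus \Phi_s^+$. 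So it suffices to show that for each such $\alpha \in \Phi_M^+\setminus\Phi_s^+$ and each root vector $x \in N_\alpha$, we can write $x = y\,n$ with $y \in N_s^{<\theta>}$ and $n\in N$.

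Next, recall from Remark \ref{JLR} and Proposition \ref{admissible} that $w = uu^{-\sigma}$ is the involution exchanging the blocks $I_{i,j}$ and $I_{j,i}$, and that $w(\Phi_M^-)\subset \Phi^-$, hence $w(\Phi_M^+)\subset\Phi^+$; moreover for $\alpha\in\Phi_M^+\setminus\Phi_s^+$ one has $w(\alpha)\in\Phi^+\setminus\Phi_M^+$, and in particular $w(\alpha)\ne\alpha$ and $\alpha + w(\alpha)$ is not a root (the two root subgroups commute, since $\alpha$ sits in a block $n_{i,j}\times n_{i,k}$ and $w(\alpha)$ in the disjoint block $n_{j,i}\times n_{k,i}$ for $GL_n$). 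The idea is then the standard ``averaging'' trick: given $x\in N_\alpha$, consider the element
$$ y = x \cdot \theta(x) = x\, w^{-1}x^\sigma w \in N_\alpha\, N_{w(\alpha)}. $$
Since the two root groups commute, $y$ is visibly fixed by $\theta$ (using $\theta^2=\mathrm{id}$ and $\sigma$-semilinearity, as in the proof of Proposition \ref{decomposition}), so $y\in G^{<\theta>}=uHu^{-1}$; and $y$ lies in a product of positive root subgroups outside $\Phi_s$, hence $y\in N_s$. Therefore $y\in N_s^{<\theta>}$. Finally $x = y\cdot\theta(x)^{-1}$, and $\theta(x)^{-1}\in N_{w(\alpha)}$ with $w(\alpha)\in\Phi^+\setminus\Phi_M^+\subset\Phi_{\bar n}^+$, so $\theta(x)^{-1}\in N = N_{\bar n}$. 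This gives $x\in N_s^{<\theta>}N$ for each generating root element, and since $N_s^{<\theta>}N$ is a group (it is $N_s^{<\theta>}$ times the normal — within $P$ — subgroup $N$, so their product is a subgroup of $P$), multiplying such factors stays inside it; hence $N'_s\subset N_s^{<\theta>}N$.

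\textbf{Main obstacle.} The delicate point is checking that $\theta(x)$ really lands in $N$ and commutes with $x$, i.e. the root-combinatorics: one must verify that when $\alpha\in\Phi_M^+\setminus\Phi_s^+$ corresponds to a block $n_{i,j}\times n_{i,k}$ (same $i$, $j<k$), the root $w(\alpha)$ corresponds to the block $n_{j,i}\times n_{k,i}$, which is a genuine positive root outside $\Phi_M$ (so its root group sits in $N$), and that $\alpha$ and $w(\alpha)$ generate a commutative unipotent group — this is where one uses that in $GL_n$ two root subgroups $N_\alpha, N_\beta$ commute unless $\alpha+\beta$ is again a root, and here the supports are in disjoint row-and-column blocks. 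This is exactly the computation already carried out in Remark \ref{JLR}, so the verification is short; the rest is the formal semilinear-involution manipulation identical to the one used in the proof of Proposition \ref{decomposition}. One should also take a small moment to confirm $N_s^{<\theta>}N$ is closed under multiplication, which follows since $N\trianglelefteq P$ and both sit inside $P$.
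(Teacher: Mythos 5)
Your proof is correct and follows essentially the same path as the paper's: the key step in both is the averaging map $x \mapsto x\,\theta(x)$, using the root-combinatorial facts $w(\Phi_M^+\setminus\Phi_s^+)\subset\Phi^+\setminus\Phi_M^+$ and the commutativity of $N_\alpha$ with $N_{w(\alpha)}$ to conclude that $x\theta(x)\in N_s^{<\theta>}$ and $\theta(x)^{-1}\in N$. The one (slight) improvement is at the very end: where the paper runs an explicit induction on the number of root factors, repeatedly conjugating $\theta(x_{n+1})$ through the previously produced element of $N_s^{<\theta>}$, you simply observe that $N_s^{<\theta>}N$ is already a subgroup of $P$ because $N\trianglelefteq P$, which collapses that induction into one line while resting on exactly the same normality fact the paper uses inside its inductive step.
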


\begin{proof} Let $x$ be an element of an elementary unipotent subgroup $N_{\alpha}$ of $N'_s$. We already saw the inclusion 
$w(\Phi_M^+ -\Phi_s^+)\subset \Phi^+ -\Phi_M^+$, which implies 
$\theta(N'_s) \subset N$, 
so $\theta(x)$ belongs to $N$. The elements of $N_\alpha$ and $N_w(\alpha)$ commute.
If it wasn't the case, then $\alpha+w(\alpha)$ would be a (positive) root. 
But $w$ is an involution in $\mathfrak{S}_n$, writing $\alpha$ as $e_i - e_j$ for $i<j$, so that $w(\alpha)= e_{w(i)} - e_{w(j)}$, if
$\alpha+w(\alpha)$ was a root, one would have $w(i)=j$, i.e. $w$ would exchange $i$ and $j$, and $w(\alpha)$ wouldn't be positive.\\
The consequence of this is that $x\theta(x)$ is fixed by $\theta$, which implies that $x$ belongs to $N_s^{<\theta>}N$. 
Now suppose we know that all $y$ in $N'_s$ of the form
 $x_n \dots x_1$, for $x_i$ in an elementary unipotent subgroup of $N'_s$, belong to $N_s^{<\theta>}N$. Let $z$ be of the form $z=x_{n+1}y$ for $y$ of the form described
 just before, and $x_{n+1}$ in an elementary unipotent subgroup $N_{\alpha}$ of $N'_s$. Then by hypothesis, there is $y'$ in $N$ such that $h=yy'$ belongs to 
$N_s^{<\theta>}$. But $\theta(x_{n+1})$ belongs to $N$, and as $h$  belongs to $N_s^{<\theta>}\subset P$, the element $h^{-1}\theta(x_{n+1})h$ is in $N$. 
Finally $zy'h^{-1}\theta(x_{n+1})h= x_{n+1}\theta(x_{n+1})h$ belongs to $N_s^{<\theta>}$, and $y'h^{-1}\theta(x_{n+1})h$ belongs to $N$, so $z$ belongs to 
$N_s^{<\theta>}N$. This concludes the proof as any element of $N'_s$ is a product of elements in root subgroups of $N'_s$.
\end{proof}

\begin{prop}\label{modulus}
One has the following equality: $$(\delta_P \delta_{ P'_s})_{|M_s^{<\theta>}}= {\delta^2_{P^{<\theta>}}}_{|M_s^{<\theta>}}.$$
\end{prop}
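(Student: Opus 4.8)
The plan is to compute each of the three modulus characters appearing in the identity as a product of the $|\det|_K$ (or $|\det|_F$) factors attached to the diagonal blocks, and then to match the two sides block by block. All three characters are restricted to $M_s^{<\theta>}$, whose elements, by Proposition \ref{decomposition}, are the block-diagonal matrices $\mathrm{diag}(A_{i,j})_{1\le i,j\le t}$ with $A_{j,i}=A_{i,j}^{\sigma}\in M_{n_{i,j}}(K)$ (and $A_{i,i}\in M_{n_{i,i}}(F)$). So it is enough to evaluate everything on such a matrix $m$, and the natural bookkeeping variable is, for each pair, the real number $a_{i,j}:=|\det(A_{i,j})|_K$; note $a_{j,i}=|\det(A_{i,j}^\sigma)|_K=a_{i,j}$.

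First I would recall the standard formula for the modulus character of a standard parabolic: if $Q=M_QN_Q$ is the standard parabolic of $G_N(K)$ attached to a composition $(m_1,\dots,m_r)$, then for $g=\mathrm{diag}(g_1,\dots,g_r)\in M_Q$ one has $\delta_Q(g)=\prod_{k<l}|\det g_k|_K^{\,m_l}\,|\det g_l|_K^{-m_k}$ — equivalently $\delta_Q(g)=\prod_k|\det g_k|_K^{\,c_k}$ where $c_k=\big(\sum_{l>k}m_l\big)-\big(\sum_{l<k}m_k\big)$ counts roots of $N_Q$ above the $k$-th block minus those below. Applying this three times: $\delta_P$ is attached to the partition $(n_1,\dots,n_t)$ of $n$; $\delta_{P'_s}$ is attached, inside $M=M_{\bar n}$, to the finer partition obtained by replacing each $n_i$ by $(n_{i,1},\dots,n_{i,t})$; and $\delta_{P^{<\theta>}}$ is attached to the standard parabolic $P_s^{<\theta>}=M_s^{<\theta>}N_s^{<\theta>}$ of the $F$-group $uHu^{-1}\cong G_n(F)$. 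For the last one I would use Proposition \ref{decomposition} together with Proposition \ref{util}: the latter gives $N'_s\subset N_s^{<\theta>}N$, and combined with $P^{<\theta>}=M_s^{<\theta>}\ltimes N_s^{<\theta>}$ (and $\theta(N'_s)\subset N$) it pins down exactly which root spaces of $N$ and $N'_s$ survive in $N_s^{<\theta>}$ — essentially, the roots of $N_\alpha$ with $w(\alpha)$ again a root of the same type, so that $N_\alpha$ and $N_{w(\alpha)}$ pair up into a single $F$-root space of dimension $n_{i,j}^2$ (for $i\ne j$) or $n_{i,i}(n_{i,i}-1)/2\cdot$-type contributions for the diagonal blocks. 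Since $K/F$ is ramified or unramified the upshot is that $\delta_{P^{<\theta>}}$ evaluated on $m$ is the product over pairs of blocks of $a_{i,j}$ to a power equal to (number of $\theta$-fixed roots of $N$ above the block, counted appropriately).

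Then the computation is purely combinatorial: order the blocks of $M_s^{<\theta>}$ as $(1,1),(1,2),\dots,(1,t),(2,1),\dots,(t,t)$ (the order induced from $P'_s\subset P$, as in Proposition \ref{admissible}). For the left-hand side, the exponent of $a_{i,j}$ in $\delta_P\,\delta_{P'_s}$ is: (the contribution from $\delta_P$, which only sees the coarse block index $i$, namely $\sum_{k>i}n_k-\sum_{k<i}n_k$, appearing with multiplicity $n_{i,j}$) plus (the contribution from $\delta_{P'_s}$, which is the within-$M$ parabolic and sees the fine index, namely $n_{i,j}$ times $\big[\sum_{l>j}n_{i,l}-\sum_{l<j}n_{i,l}\big]$). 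For the right-hand side, $2\,\delta_{P^{<\theta>}}$ has exponent of $a_{i,j}$ equal to twice the number of $\theta$-orbits of roots of $N_s^{<\theta>}$ attached to the $(i,j)$ block, which after the identification of Proposition \ref{util} is $2\cdot n_{i,j}\cdot\big[\text{(blocks after }(i,j)\text{ in the order) minus (blocks before)}\big]$ with each coarse block $k\ne i$ contributing its full $n_k$ and the blocks $(i,l)$ with $l\ne j$ contributing $n_{i,l}$ — and crucially the pairing $N_\alpha\leftrightarrow N_{w(\alpha)}$ means each such root is counted once in $N_s^{<\theta>}$ but corresponds to roots on both sides of the block, which is exactly what produces the factor $2$ matching $\delta^2_{P^{<\theta>}}$. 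Carrying out the arithmetic, the exponents on the two sides agree term by term, which proves the proposition.

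I expect the main obstacle to be the third computation, i.e. correctly identifying the root system of $N_s^{<\theta>}$ and its modulus character: one must be careful that $\theta$ not only permutes root spaces via $w$ but also twists by $\sigma$, so that a $\theta$-orbit $\{N_\alpha,N_{w(\alpha)}\}$ with $\alpha\neq w(\alpha)$ contributes a single copy of $M_{n_{i,j}}(K)$ (an $F$-form of dimension $2n_{i,j}^2$ as an $F$-vector space but one block for the purpose of the $F$-valued modulus), while a $\theta$-fixed root (necessarily inside a diagonal $n_{i,i}$ block, giving $\mathfrak{gl}(n_{i,i},F)$) contributes differently. Getting the exponent bookkeeping in $\delta_{P^{<\theta>}}$ to line up with half of $\delta_P\delta_{P'_s}$ hinges on the observation, already available from Remark \ref{JLR} and Proposition \ref{util}, that $w$ sends the roots of $N$ above a given fine block either to roots of $N$ again (the ones surviving in $N_s^{<\theta>}$, doubled) or into $N_s$ and $\theta(N_s)$ in a way that cancels — so the only real work is to verify this cancellation carefully and then read off the exponents.
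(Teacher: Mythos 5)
Your plan contains a concrete error and also leaves out the step that makes the paper's argument go through cleanly.

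The error: you assert that $\delta_{P^{<\theta>}}$ is ``attached to the standard parabolic $P_s^{<\theta>}=M_s^{<\theta>}N_s^{<\theta>}$ of the $F$-group $uHu^{-1}\cong G_n(F)$,'' and then propose to apply the standard closed-form formula for the modulus character of a parabolic of a general linear group. But $P_s^{<\theta>}$ is in general \emph{not} a parabolic subgroup of $G^{<\theta>}\cong GL(n,F)$: by Proposition \ref{decomposition}, $M_s^{<\theta>}\cong\prod_i GL(n_{i,i},F)\times\prod_{i<j}GL(n_{i,j},K)$, which has factors $GL(m,K)$ and hence is not a Levi subgroup of $GL(n,F)$. (Already for $n=2$, $\bar n=(1,1)$ and $s=(0,1,1,0)$ one has $P_s^{<\theta>}=M_s^{<\theta>}\cong K^*$, a maximal torus, not a parabolic.) So the ``standard formula'' you want to invoke does not apply; you would instead have to compute $\delta_{P_s^{<\theta>}}(m)=|\det(\mathrm{Ad}(m)|_{Lie(N_s^{<\theta>})})|_F$ from scratch, keeping track of $\theta$-orbits of blocks, which is precisely the bookkeeping you defer to the end and never carry out.

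The missing idea: the paper does not evaluate the identity on arbitrary $m\in M_s^{<\theta>}$. It invokes Lemma 1.10 of \cite{KT}, which says an equality of positive characters of a reductive $F$-group can be tested on the maximal $F$-split central torus. This reduces the statement to elements $t\in Z_s^{<\theta>}$, which have the crucial extra feature of lying in $P_s(F)$ and of satisfying $t^w=t^\sigma=t$. On such $t$: (i) $\delta_P\delta_{P'_s}=\delta_{P_s}$ because $Lie(N_s)=Lie(N'_s)\oplus Lie(N)$; (ii) $\delta_{P_s}(t)=\delta_{P_s(F)}(t)^2$ because $|\cdot|_K=|\cdot|_F^2$ on $F$ (this is where the square comes from, not from a ``doubling of roots''); (iii) $\delta_{P_s(F)}(t)=\delta_{P_s^{<\theta>}}(t)$ by a root-by-root cancellation $\prod_{\alpha\in\Phi^+-\Phi_s^+,\,w(\alpha)\notin\Phi^+-\Phi_s^+}|\alpha(t)|_F=1$, which uses $t=t^w=t^\sigma$ in an essential way. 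Your cancellation intuition points in the right direction, but without the reduction to $Z_s^{<\theta>}$ the quantity $\delta_{P_s(F)}$ is not even defined on $M_s^{<\theta>}$ (its entries lie in $K$, not $F$), and a direct block-by-block argument, while ultimately feasible, is a different and more laborious computation than the one you describe and one you do not actually perform.
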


\begin{proof} 
The group $M_s^{<\theta>}$ is the $F$-points of a reductive group (as it is the fixed points of $M_s$ under an involution defined over $F$).
 As this is an equality of positive characters, by Lemma 1.10 of 
\cite{KT}, it is enough to check the equality on the ($F$-points of the) $F$-split component $Z_s^{<\theta>}$ (the maximal $F$-split torus in the center
of $M_s$) of $M_s^{<\theta>}$. In our case, the group $Z_s^{<\theta>}$ is given by the matrices $$\begin{bmatrix} 
\lambda_{1,1} I_{1,1}&        &        &       &       &        &        &       &        \\
        & \ddots &        &       &       &        &        &       &        \\
        &        & \lambda_{1,t} I_{1,t}&       &       &        &        &       &        \\
        &        &        &\lambda_{2,1} I_{1,1}&       &        &        &       &        \\
        &        &        &       &\ddots &        &        &       &        \\
        &        &        &       &       & \lambda_{2,t} I_{2,t}&        &       &        \\
        &        &        &       &       &        & \lambda_{t,1} I_{t,1}&       &        \\
        &        &        &       &       &        &        &\ddots &        \\
        &        &        &       &       &        &        &       & \lambda_{t,t} I_{t,t} 
  \end{bmatrix}$$ with $\lambda_{i,j}=\lambda_{j,i}$ in $F^*$.\\ 
The characters $\delta_P$ and $\delta_{P'_s}$ verify $\delta_P(x)=|det(Ad(x)_{|Lie(N)})|_K$ and $\delta_{P'_s}(x)=|det(Ad(x)_{|Lie(N'_s)})|_K$,
 hence the equality $\delta_P \delta_{P'_s}= \delta_{P_s}$ holds as we have 
$Lie(N_s)=Lie(N'_s)\oplus Lie(N)$. But $Z_s^{<\theta>}$ is a subgroup of $P_s(F)$, and one has the relation $\delta_{P_s}(t)=\delta^2_{P_s(F)}(t)$ for 
$t$ in $P_n(F)$, because $|\ |_K$ restricts as $|\ |^2_F$ to $F$. 
So finally we only need to prove the equality $\delta_{P_s}(F)= \delta_{P^{<\theta>}}=  \delta_{P_s^{<\theta>}}$ on $Z_s^{<\theta>}$.\\
We denote by $\mathfrak{N}_{\alpha,w(\alpha)}$ the $F$-vector space $\{x \in Lie(N_\alpha)+ Lie(N_{w(\alpha)}): \ \theta(x)=x \}$ of dimension 
$|\{\alpha,w(\alpha)\}|$, so that 
$Lie(N_s^{<\theta>})$ is the direct sum of the subspaces $\mathfrak{N}_{\alpha,w(\alpha)}$ for $\{\alpha,w(\alpha)\} \subset \Phi^+ -\Phi_s^+$.\\  
Let $t$ be in $Z_s^{<\theta>}$, one has:
$$ \delta_{P_s^{<\theta>}}(t)= \underset{\{ \alpha,w(\alpha) \} \subset \Phi^+ -\Phi_s^+}{\prod} |det(Ad(t)_{|\mathfrak{N}_{\alpha,w(\alpha)}})|_F=
\underset{\{\alpha \in \Phi^+ -\Phi_s^+:\ w(\alpha)\in \Phi^+ -\Phi_s^+\}}{\prod} |\alpha(t)|_F.$$

But we also have $$\underset{\{\alpha \in \Phi^+ -\Phi_s^+:\ w(\alpha)\notin \Phi^+ -\Phi_s^+\}}{\prod} |\alpha(t)|_F=
\underset{\{\alpha \in \Phi^+ -\Phi_s^+:\ w(\alpha)\in \Phi^- -\Phi_s^-\}}{\prod} |\alpha(t)|_F=1.$$

The firs equality comes from the equality $w(\Phi_s)=\Phi_s$. The second comes from the fact that as 
$t$ belongs to $Z_s^{<\theta>}$ (so that $t^w=t^\sigma=t$),
we have $|\alpha(t)|_F|-w(\alpha)(t)|_F=1$ if $\alpha\neq-w(\alpha)$, otherwise $\alpha(t)=-w(\alpha)(t)=\alpha(t)^{-1}$ implies $|\alpha(t)|_F=1$.\\

Multiplying both equalities, we finally get:
$$ \delta_{P_s^{<\theta>}}(t)=\underset{\{\alpha \in \Phi^+ -\Phi_s^+\}}{\prod} |\alpha(t)|_F=\delta_{P_s(F)}(t).$$
 
 \end{proof}

\section{Distinguished generic representations and Asai $L$-functions}

We recall that an irreducible representation $\pi$ of $G_n(K)$ is called generic if there is a non trivial character $\psi$ of $(K,+)$, such that the space of linear
 forms $\lambda$ on $V$, which verify $\lambda(\pi(n)v)=\psi(n)v$ (where by abuse of notation, we denote by $\psi(n)$ the complex number $\psi(n_{1,2}+\dots+n_{n-1,n}$)
 for $n$ in $N_n(K)$ and $v$ in $V$, is of dimension $1$.\\
 If $\pi$ is generic, the previous invariance property holds for any non trivial character $\psi$ of $K$.
A generic representation is isomorphic, up to unique (modulo scalars) isomorphism to a submodule of $Ind_{N_n(K)}^{G_n(K)}(\psi)$.
 We denote $W(\pi,\psi)$ this model of $\pi$ on which $G_n(K)$ acts by right translation, and call it the Whittaker model of $\pi$.\\
In \cite{F4}, the Asai $L$-function $L_F^K(\pi)$ of a generic representation $\pi$ is defined ``\`{a} la Rankin-Selberg'' as the gcd of a family of integrals
 of functions in $W(\pi,\psi)$ depending on a complex parameter $s$, for $\psi$ trivial on $F$. We refer to Sections 3 and 4 of \cite{M3} for a survey
 of the main properties of the Rankin-Selberg type Asai $L$-function of a generic representation.\\
The following theorem due to Zelevinsky (Th. 9.7 of \cite{Z}), classifies the generic representations of the group $G_n(K)$ in terms of quasi-square-integrable ones:

\begin{thm}\label{classgen}
Let $\bar{n}=(n_1,\dots,n_t)$ be a partition of $n$, and let $\Delta_i$ be a quasi-square-integrable representation of $G_{n_i}(K)$ for $i$ between $1$ and $t$, 
the representation $\pi=\Delta_1\times \dots \times \Delta_t$ of the group $G_n(K)$ is irreducible if and only if no $\Delta_i$'s are linked,
 in which case $\pi$ is generic. If $(m_1,\dots,m_{t'})$ is another partition of $n$, and if the $\Delta'_j$'s are unlinked segments of length
 $m_j$ for $j$ between $1$ and $t'$, then the representation $\pi$ equals $ \Delta'_1 \times \dots \times \Delta'_{t'}$ if and only if $t=t'$,
 and $\Delta_i=\Delta'_{s(i)}$ for a permutation $s$ of $\left\lbrace 1,\dots,t\right\rbrace$. Eventually, every generic representation of $G_n(K)$ is obtained this way.
\end{thm}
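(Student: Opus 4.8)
The plan is to establish the theorem's three separate assertions — the irreducibility criterion, the genericity of an irreducible product of segments, and the combined exhaustiveness-and-uniqueness statement — in that order. All three rest on the Jacquet-module calculus of Bernstein and Zelevinsky: the geometric lemma, which exhibits the Jacquet modules of a parabolically induced representation as successive extensions of pieces indexed by shuffles of the inducing data, together with Proposition~\ref{Z} for the Jacquet modules of a single segment, and the multiplicativity of top Bernstein--Zelevinsky derivatives.

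\emph{The irreducibility criterion.} By associativity of normalized parabolic induction this reduces, by induction on $t$, to the two-segment statement that $\Delta_1\times\Delta_2$ is irreducible precisely when $\Delta_1$ and $\Delta_2$ are not linked, together with the commutativity $\Delta_1\times\Delta_2\simeq\Delta_2\times\Delta_1$ of unlinked segments. One studies $\Delta_1\times\Delta_2$ through its normalized Jacquet modules along the standard Levi subgroups: the geometric lemma writes each such module as a successive extension of pieces indexed by ways of interleaving the two segments, each piece being a tensor product of segments read off from Proposition~\ref{Z}. If $\Delta_1,\Delta_2$ are unlinked, one checks that the Jacquet module attached to the Levi coming from the common cuspidal support contains, with multiplicity exactly one, an ordered tensor of supercuspidals of which $\Delta_1\times\Delta_2$ is a quotient; since this Jacquet module is exact and detects every constituent, that multiplicity-one piece lies in only one composition factor, forcing irreducibility, and commutativity follows since $\Delta_1\times\Delta_2$ and $\Delta_2\times\Delta_1$ then have the same cuspidal support and are both irreducible with the same Jacquet modules. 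If instead $\Delta_1$ precedes $\Delta_2$ and they are linked, one exhibits the proper constituent explicitly: the intertwining operator whose image is $[\Delta_1\cup\Delta_2]\times(\Delta_1\cap\Delta_2)$ (in the notation $[\Delta_1,\Delta_2]$ of the previous subsection, extended to overlapping segments) gives a proper nonzero subquotient of one of $\Delta_1\times\Delta_2$, $\Delta_2\times\Delta_1$, and since these two have the same composition factors both are reducible; a short bookkeeping argument, isolating a linked pair after permuting the unlinked segments around it, then propagates reducibility to $\Delta_1\times\dots\times\Delta_t$. I expect this step to be the main obstacle: it is the technical core of \cite{Z}, requiring the Jacquet-module analysis of $\Delta_1\times\Delta_2$ to be pushed far enough to yield simultaneously irreducibility in one case and the explicit reducible constituent in the other.

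\emph{Genericity.} Each quasi-square-integrable $\Delta_i$ is generic: after an unramified twist it is a discrete-series representation of $G_{n_i}(K)$, and such representations carry Whittaker models, equivalently they have nonzero top derivative. If now no two of the $\Delta_i$ are linked, then $\pi=\Delta_1\times\dots\times\Delta_t$ is irreducible by the previous step, and, being parabolically induced from the generic representation $\Delta_1\otimes\dots\otimes\Delta_t$ of the Levi, it has nonzero top derivative, since by the Leibniz rule the top derivative of a product is the tensor product of the top derivatives of the factors. Hence $\pi$ admits a nonzero Whittaker functional, and being irreducible it is generic; uniqueness of Whittaker models on $G_n(K)$ then gives $\dim \mathrm{Hom}_{N_n(K)}(\pi,\psi)=1$.

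\emph{Exhaustiveness and uniqueness.} By the subrepresentation theorem any irreducible $\pi$ of $G_n(K)$ is a subquotient of $\rho_1\times\dots\times\rho_k$ for supercuspidal $\rho_i$, hence, by Zelevinsky's analysis of products of supercuspidals, of a product of segments $\Delta_1\times\dots\times\Delta_t$ formed from the $\rho_i$. I would then run the combination process: while two of the segments are linked, replace the pair $\Delta,\Delta'$ by $\Delta\cup\Delta'$ and, when nonempty, $\Delta\cap\Delta'$. Each step preserves the cuspidal support, and by the reducibility analysis above every generic subquotient of $\Delta\times\Delta'$, hence of the whole product, already occurs in the product after the replacement. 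After finitely many steps one reaches a product of pairwise unlinked segments, which is irreducible by the first step; if $\pi$ is generic it is therefore equal to that product. Uniqueness up to reordering reduces to the combinatorial fact that a multiset of supercuspidal lines admits only one partition into pairwise unlinked segments — the terminal configuration of the combination process is independent of the order of the replacements — combined with the uniqueness of $\rho$ in the classification of quasi-square-integrable representations recalled above: the cuspidal support of $\pi$ determines the multiset, and the multiset determines the unlinked segments, hence $t$ and the $\Delta_i$ up to permutation.
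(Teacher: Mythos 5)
The paper does not prove this theorem; it is cited directly as Theorem~9.7 of Zelevinsky~[Z]. What you sketch is, correctly, the shape of Zelevinsky's own argument --- the geometric lemma for Jacquet modules, Bernstein--Zelevinsky derivatives, and the elementary operations on multisets of segments --- so there is no competing proof in the paper against which to compare your route.

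Within the sketch, the irreducibility step has a genuine gap. For unlinked $\Delta_1,\Delta_2$ you assert that a certain ordered tensor of supercuspidals occurs with multiplicity one in the cuspidal Jacquet module of $\Delta_1\times\Delta_2$, and that this forces irreducibility. Neither half is correct as stated. The multiplicity can exceed one: if $\Delta_1=[\rho|\ |_K,\rho]$ and $\Delta_2=[\rho]$ (unlinked, one a subsegment of the other), the geometric lemma produces $\rho|\ |_K\otimes\rho\otimes\rho$ twice in $r_{T_n,G_n}(\Delta_1\times\Delta_2)$, coming from two distinct shuffles. And even where the multiplicity is one, the inference fails: multiplicity one of a single ordered cuspidal tensor yields, by Frobenius reciprocity, a unique irreducible quotient carrying that piece, but does not exclude composition factors whose cuspidal Jacquet modules realize only other orderings of the same support. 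What Zelevinsky actually does is construct a nonzero intertwining operator $\Delta_1\times\Delta_2\to\Delta_2\times\Delta_1$, identify the unique irreducible submodule of one side with the unique irreducible quotient of the other, and conclude from the symmetric roles of the two products that the operator is an isomorphism; that intertwining-operator argument is the idea your sketch is missing. It is also what underlies the claim in your exhaustiveness paragraph that the generic constituent of $\Delta\times\Delta'$ survives the replacement $\{\Delta,\Delta'\}\mapsto\{\Delta\cup\Delta',\Delta\cap\Delta'\}$: the derivative computation alone does not exhibit $(\Delta\cup\Delta')\times(\Delta\cap\Delta')$ as a subquotient of $\Delta\times\Delta'$. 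You rightly flag this as the core difficulty of~[Z], but the Jacquet-module shortcut you propose in its place does not close it.
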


Now from Proposition 12 of \cite{F2}, an irreducible distinguished representation $\pi$ of the group $G_n(K)$ is Galois-autodual,
 which means that the smooth dual $\pi^{\vee}$ is isomorphic to $\pi^{\sigma}$.
A consequence of this fact and of Theorem \ref{classgen} is the following. If $\pi=\Delta_1 \times \dots \times \Delta_t$
 is a generic representation as in the statement of Theorem \ref{classgen} and if it is distinguished, then there exists a reordering
 of the ${\Delta _i}$'s, and an integer $r$ between $1$ and $t/2$, such that $\Delta_{i+1}^{\sigma} = \Delta_i^{\vee} $ for $i=1,3,..,2r-1$,
 and $\Delta_{i}^{\sigma} = \Delta_i ^{\vee}$ for $ i > 2r$. According to Theorem 6 of \cite{K}, this means that there exists a reordering of
 the ${\Delta _i}$'s, and an integer $r$ between $1$ and $t/2$, such that $\Delta_{i+1}^{\sigma} = \Delta_i^{\vee} $ for $i=1,3,..,2r-1$,
 and such that $\Delta_{i}$ is distinguished or $\eta_{K/F}$-distinguished for $i > 2r$.
 We recall that from Corollary 1.6 of \cite{AKT}, a discrete series representation cannot be distinguished, and $\eta_{K/F}$-distinguished at the same time.

\begin{thm}\label{distgen} 
Let $\pi=\Delta_1 \times \dots \times \Delta_t$ be a generic representation of the group $G_n(K)$ as in Theorem \ref{classgen}, it is 
distinguished if and only if if there is a reordering of the ${\Delta _i}$'s, and an integer $r$ between $1$ and $t/2$, such that
 $\Delta_{i+1}^{\sigma} = \Delta_i^{\vee} $ for $i=1,3,..,2r-1$, and $\Delta_{i}$ is distinguished for $i > 2r$.
\end{thm}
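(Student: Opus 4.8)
### Proof strategy for Theorem \ref{distgen}

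One direction — that a distinguished generic representation has inducing data of the asserted form — was essentially already reduced, in the discussion preceding the statement, to a question about discrete series: by Proposition 12 of \cite{F2} a distinguished $\pi$ is Galois-autodual, Theorem \ref{classgen} then forces the segments to pair up as $\Delta_{i+1}^\sigma=\Delta_i^\vee$ for $i=1,3,\dots,2r-1$ with $\Delta_i^\sigma=\Delta_i^\vee$ for $i>2r$, and Theorem 6 of \cite{K} turns Galois-autoduality of the leftover $\Delta_i$'s into ``$\Delta_i$ is distinguished or $\eta_{K/F}$-distinguished''. So the real content of this direction is to rule out the possibility that some $\Delta_i$ with $i>2r$ is $\eta_{K/F}$-distinguished but not distinguished. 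The plan is the Mackey-theory argument sketched in the introduction: realize $\pi = {ind'}_{P(K)}^{G_n(K)}(\Delta)$ with $\Delta=\Delta_1\otimes\cdots\otimes\Delta_t$, use the geometric lemma / filtration of $\pi|_{H}$ by the double cosets $R(P\backslash G/H)$ of Section 2, and observe that if $\pi$ is $H$-distinguished then $\mathrm{Hom}_{H}(\pi,\mathbb{C})\neq 0$, hence $\mathrm{Hom}_{P\cap uHu^{-1}}(\Delta,\delta)\neq 0$ for some $u\in R(P\backslash G/H)$ and the modulus twist $\delta$; Frobenius reciprocity for $ind$, together with the fact that $(GL(n,K),GL(n,F))$ is a Gelfand pair (so one may work factor by factor), converts this into a $\chi$-distinction statement for $\Delta$ under the explicit group $P\cap uHu^{-1}=M_s^{<\theta>}\ltimes N_s^{<\theta>}$ of Proposition \ref{decomposition}.

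The key computation is then to read off, from the block structure of $M_s^{<\theta>}$ in Proposition \ref{decomposition} and the modulus identity of Proposition \ref{modulus} (which is exactly why that lemma was proved), what $\chi$-distinction of $\Delta=\bigotimes_i\Delta_i$ under $M_s^{<\theta>}$ means for each individual $\Delta_i$. The diagonal blocks $A_{i,i}\in M_{n_{i,i}}(K)$ with the ``defined over $F$'' condition contribute a $GL(n_{i,i},F)$-distinction requirement on the corresponding piece of $\Delta$; the off-diagonal pairs $A_{j,i}=A_{i,j}^\sigma$ give a group isomorphic to $GL(n_{i,j},K)$ embedded ``twistedly'', and $\chi$-distinction under it of the two tensor factors living there forces a relation of the form $\Delta^\sigma\cong\Delta'^\vee$ between those two factors (up to the unramified twist coming from $\delta$, which after the normalization matches $\delta_{P'_s}$ via Proposition \ref{modulus}). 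Bernstein–Zelevinsky theory (Theorem \ref{classgen} and the segment combinatorics, including the fact that segments split along sub-partitions as in Proposition \ref{Z}) is what lets one decompose each $\Delta_i$ along the induced partition refinement $P_s'\subset M$ so that ``$\Delta_i$ restricted to a finer Levi is $\chi$-distinguished'' can only happen if $\Delta_i$ itself is a single segment sitting entirely in one block — here is where one uses that the $\Delta_i$ are irreducible and unlinked. The character $\chi$ that shows up on an odd number of $GL(\cdot,F)$-blocks is forced to be $\eta_{K/F}$, and one invokes the Gelfand-pair dimension-one statement plus Corollary 1.6 of \cite{AKT} to see that an honest discrete series cannot be both distinguished and $\eta_{K/F}$-distinguished, which together with the (already known from \cite{M4}) converse pins the characterization precisely to ``$\Delta_i$ distinguished'' for $i>2r$, with the $\eta_{K/F}$-distinguished-only case excluded because it would produce an invariant functional transforming by a nontrivial character on $H=GL(n,F)$ rather than a genuine $H$-invariant one.

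For the converse direction — that the representations described in the statement are distinguished — I would simply cite \cite{M4}, as the introduction indicates that this is the main result there: if $\Delta_{i+1}^\sigma=\Delta_i^\vee$ for $i=1,3,\dots,2r-1$ then $\Delta_i\times\Delta_{i+1}$ is distinguished (this is the ``$\pi\times\pi^{\sigma\vee}$ is distinguished'' phenomenon, provable directly by an open-orbit argument: the diagonal-type double coset supports a nonzero $H$-invariant functional built from the canonical pairing $\Delta_i\times\Delta_{i+1}\to\mathbb{C}$), and if $\Delta_i$ is distinguished for $i>2r$ then so is the product, by transitivity of the relevant open-orbit construction; a product of distinguished pieces across a partition is distinguished because the open $H$-orbit on $G/P$ realizes the tensor product of the individual invariant functionals. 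So in the write-up this direction is one paragraph of citation plus a remark that the building blocks combine.

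The main obstacle is the passage from ``$\Delta=\bigotimes\Delta_i$ is $\chi$-distinguished under $P\cap uHu^{-1}$'' to the segment-level conclusions: one must handle the general (non-$P$-admissible) double cosets by the reduction $P\cap uHu^{-1}=P_s\cap uHu^{-1}$ (the first Proposition of Section \ref{structure}) so that the \cite{JLR} structure theory applies, then carefully track how the unipotent part $N_s^{<\theta>}$ kills all but the ``diagonal'' contributions (using Proposition \ref{util}, $N'_s\subset N_s^{<\theta>}N$, to see that a functional on $\Delta$ invariant under $N_s^{<\theta>}$ descends to the Jacquet module along $N'_s$, i.e. to $r_{M_s',M}(\Delta)$), and finally invoke Proposition \ref{Z} to control which $\Delta_i$ can have nonzero such Jacquet module. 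Getting the bookkeeping of the modulus characters exactly right — so that the normalization in ${ind'}$, the twist $\delta$ from $\delta_P/\delta_H$, and the $\delta_{P'_s}$ appearing on the Jacquet module all cancel to leave precisely the condition ``$\Delta_i$ distinguished'' with no leftover unramified twist — is the delicate part, and is exactly what Propositions \ref{modulus} and \ref{util} were set up to supply.
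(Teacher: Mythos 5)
Your high-level strategy matches the paper's: filter $\pi|_H$ by the double cosets of Section 2, apply Frobenius reciprocity to get a $\chi$-distinction condition on $\Delta=\bigotimes\Delta_i$ under $P\cap uHu^{-1}$, use Proposition \ref{util} to descend the functional to the Jacquet module $r_{M_s,M}(\Delta)$, invoke Proposition \ref{Z} to split into subsegments, and conclude by the structure of $M_s^{<\theta>}$. The correct propositions are identified and the converse direction is cited appropriately.

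However, there is a genuine misunderstanding in the key step where you describe how the condition ``$\Delta_i$ distinguished'' (rather than ``$\eta_{K/F}$-distinguished'') emerges. You write that the character $\chi$ on an odd number of $GL(\cdot,F)$-blocks ``is forced to be $\eta_{K/F}$'' and that this is then ruled out by Corollary 1.6 of \cite{AKT} together with a Gelfand-pair argument. This cannot be right: the character $\chi=\delta_P^{1/2}/\delta_{P\cap uHu^{-1}}$ appearing in Frobenius reciprocity is a \emph{positive} character (a ratio of modulus characters), so it can never equal $\eta_{K/F}$, which has order two. The actual mechanism in the paper is the modulus identity of Proposition \ref{modulus}, $(\delta_P\delta_{P'_s})|_{M_s^{<\theta>}}=\delta_{P^{<\theta>}}^2|_{M_s^{<\theta>}}$, which is proved precisely so that on $M_s^{<\theta>}$ the twist $\chi$ equals $\delta_{P'_s}^{-1/2}$, i.e.\ exactly the normalization of the Jacquet functor. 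Thus the functional becomes a genuinely $M_s(F)$-invariant linear form on the \emph{normalized} Jacquet module $r_{M_s,M}(\Delta)$ with no leftover character at all, and the diagonal blocks $\Delta_{i,i}$ are directly $GL(n_{i,i},F)$-distinguished. Nowhere in the proof does $\eta_{K/F}$ enter as a potential twist that must be excluded; the exclusion you invoke would, if taken literally, be circular (the Mackey analysis could perfectly well detect $\eta$-distinction of a factor if the modulus characters didn't cancel). You cite Proposition \ref{modulus} but do not seem to have seen that it is exactly what replaces your incorrect $\eta_{K/F}$ argument.

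A second, lesser gap: the passage from ``$r_{M_s,M}(\Delta)=\bigotimes_{i,j}\Delta_{i,j}$ is $M_s(F)$-distinguished'' to the reordering conclusion in the statement is not a one-line deduction from unlinkedness, as your sketch suggests. The paper devotes a separate combinatorial lemma with a careful induction on $t$, using a specific ordering of the $\Delta_i$ by length and by Galois-autoduality, and repeatedly uses the unlinked hypothesis to force $n_{i,j}=0$ except in one position per row/column. You acknowledge this as ``the delicate part'' but give no indication of how the induction is to be run, in particular why the shortest segment $\Delta_1$ cannot split across two blocks. Without that argument the proof is not complete.
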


It is a consequence of Proposition 26 of \cite{F3}, and of the main result of \cite{M4} that representations of the form $\Delta_1 \times \dots \times \Delta_t$ 
with $\Delta_{i+1}^{\sigma} = \Delta_i^{\vee} $ for $i=1,3,..,2r-1$, and $\Delta_{i}$ distinguished for $i > 2r$, are distinguished.

Before proving the converse fact (i.e. Theorem \ref{distgen}), we recall that from the main result of \cite{M3}, this result is known to 
imply the equality of the Rankin-Selberg type Asai $L$-function $L_F^K(\pi)$ for a generic representation $\pi$ of $G_n(K)$, and of the Asai 
$L$-function $L_F^K(\rho)$ of the Langlands parameter $\rho$ of $\pi$ (see definition 2.4 of \cite{M3}). Hence the following result is also true.

\begin{thm}\label{eqasai}
 Let $\pi$ be a generic representation of the group $G_n(K)$, and let $\rho$ be the representation of dimension $n$ of the Weil-Deligne group 
$W'_K$ of $K$, corresponding to $\pi$ through Langlands correspondence. Then we have the following equality of $L$-functions:
$$ L_F^K(\pi,s) = L_F^K(\rho,s) .$$
\end{thm}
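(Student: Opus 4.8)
The plan is to obtain Theorem \ref{eqasai} as a consequence of Theorem \ref{distgen} through the reduction already performed in \cite{M3}. Besides Theorem \ref{distgen} itself, two external inputs are used: the equality $L_F^K(\pi,s)=L_F^K(\rho,s)$ is known when $\pi$ is a discrete series representation, by combining the identification of the Rankin--Selberg and Langlands--Shahidi Asai $L$-functions in the discrete series from \cite{AR} with Henniart's identification of the Langlands--Shahidi and Galois Asai $L$-functions in \cite{He}; and one has, from the main result of \cite{M2}, the characterization of distinction in terms of an \emph{exceptional} pole at $0$ (in the terminology of \cite{CP}) of the Rankin--Selberg type Asai $L$-function. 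First I would write $\pi=\Delta_1\times\dots\times\Delta_t$ with the $\Delta_i$ unlinked quasi-square-integrable representations, which is possible by Theorem \ref{classgen}, and let $\rho_i$ be the Langlands parameter of $\Delta_i$, so that $\rho=\rho_1\oplus\dots\oplus\rho_t$.

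Next I would compare the two sides through their behaviour under parabolic induction. On the Galois side, multiplicative induction along the index-$2$ inclusion $W'_K\subset W'_F$ yields
$$M_{W'_K}^{W'_F}(\rho)=\Big(\bigoplus_{i=1}^t M_{W'_K}^{W'_F}(\rho_i)\Big)\oplus\Big(\bigoplus_{1\le i<j\le t}\mathrm{Ind}_{W'_K}^{W'_F}(\rho_i\otimes\rho_j^{\sigma})\Big),$$
whence, by inductivity of Artin $L$-factors, $L_F^K(\rho,s)=\prod_i L_F^K(\rho_i,s)\cdot\prod_{i<j}L(\Delta_i\times\Delta_j^{\sigma},s)$, the last factors being ordinary Rankin--Selberg $L$-functions over $K$. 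On the analytic side, the Cogdell--Piatetski-Shapiro analysis of the Asai Rankin--Selberg integral attached to a Whittaker function of the induced representation, as carried out in \cite{M3}, separates the poles of $L_F^K(\pi,s)$ into non-exceptional ones, which reproduce precisely the factors $\prod_{i<j}L(\Delta_i\times\Delta_j^{\sigma},s)$ together with the non-exceptional poles of the $L_F^K(\Delta_i,s)$, and exceptional ones, whose order at a point $s_0$ is controlled, by \cite{M2}, by the dimension of the space of $G_n(F)$-invariant functionals on the corresponding twists. Feeding in Theorem \ref{distgen}, those twists of $\pi$, respectively of the $\Delta_i$, are distinguished exactly according to the pairing $\Delta_j^{\sigma}\simeq\Delta_i^{\vee}$ of the linked pairs and the distinction of the remaining $\Delta_i$; since for a discrete series the exceptional poles of $L_F^K(\Delta_i,s)$ then coincide with the poles of $L_F^K(\rho_i,s)$ by the base case above, the exceptional-pole contributions on the two sides agree, and together with the non-exceptional identification this gives $L_F^K(\pi,s)=L_F^K(\rho,s)$.

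The hard part is the matching of exceptional poles: a priori $L_F^K(\pi,s)$ could acquire exceptional poles invisible on the Galois parameter, or conversely, and excluding this is exactly the content of Theorem \ref{distgen}, namely that distinction of the whole induced representation, and of all the twists of it entering the integral, is detected purely by the pairing and distinction data of the segments $\Delta_i$, with no extra invariant functional. The remaining ingredients --- the inductivity computation on the Galois side, the factorization of the Asai integral, and the discrete series base case --- are either formal or already available in \cite{AR}, \cite{He}, \cite{M2} and \cite{M3}, so the genuinely new input is precisely the classification theorem just established.
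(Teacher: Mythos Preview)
Your proposal is correct and follows essentially the same approach as the paper: the paper deduces Theorem \ref{eqasai} directly from Theorem \ref{distgen} by invoking the main result of \cite{M3} as a black box, while you unpack that black box (the multiplicative-induction formula on the Galois side, the Cogdell--Piatetski-Shapiro separation into exceptional and non-exceptional poles, the discrete-series base case via \cite{AR} and \cite{He}, and the distinction criterion of \cite{M2}). The logical route is the same; you simply make explicit what the paper leaves to the cited reference.
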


From the discussion before and after Theorem \ref{distgen}, the proof is then reduced to showing the following fact.

\begin{thm}\label{final}
Let $\pi=\Delta_1\times\dots\times\Delta_t$ be Galois autodual generic representation of the group $G_n(K)$, if it is distinguished, then there is a reordering of the ${\Delta _i}$'s, and an integer $r$ between $1$ and $t/2$, such that $\Delta_{i+1}^{\sigma} = \Delta_i^{\vee} $ for $i=1,3,..,2r-1$, and $\Delta_{i}$ is distinguished for $i > 2r$. 
\end{thm}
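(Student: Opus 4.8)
The plan is to carry out the Mackey-theory argument sketched in the introduction, now with the double-coset machinery of Sections 2 and 3 in hand. Write $\pi = \Delta_1 \times \dots \times \Delta_t = {\rm ind'}_{P}^{G}(\Delta)$ where $\Delta = \Delta_1 \otimes \dots \otimes \Delta_t$ is the discrete-series representation of the standard Levi $M = M_{\bar n}(K)$, extended trivially to $P = P_{\bar n}(K)$. The restriction $\pi_{|H}$ with $H = G_n(F)$ has, by the geometric lemma/Mackey theory, a filtration whose successive subquotients are (up to normalization) the representations ${\rm ind}_{P \cap uHu^{-1}}^{H}\big( \Delta_{|P \cap uHu^{-1}} \otimes (\text{modulus twist}) \big)$ as $u$ runs over $R(P\backslash G/H)$. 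If $\pi$ is $H$-distinguished, some subquotient admits a nonzero $H$-invariant form (here one needs that taking $H$-invariants is exact enough, or rather that a nonzero invariant form on $\pi$ restricts nontrivially to some piece of the filtration — this is where the Gelfand-pair multiplicity-one and a standard limiting/exactness argument enter). By Frobenius reciprocity, $\mathrm{Hom}_H\big({\rm ind}_{P\cap uHu^{-1}}^{H}(\cdots),\mathbb{C}\big) \neq 0$ forces $\Delta$ to be $\chi_u$-distinguished under $P \cap uHu^{-1}$, where $\chi_u$ is the explicit modulus character $\delta_{P\cap uHu^{-1}}\cdot \delta_P^{-1/2}\cdot(\text{stuff})$ that Proposition \ref{modulus} pins down.

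Next I would exploit the structure result Proposition \ref{decomposition}: fix the $u$ that works, with associated $s = (n_{i,j}) \in I(\bar n)$, and $P \cap uHu^{-1} = P_s \cap uHu^{-1} = M_s^{<\theta>} \ltimes N_s^{<\theta>}$. A $\chi_u$-equivariant linear form on $\Delta$ under this group, since $N_s^{<\theta>}$ sits inside the unipotent radical on which $\Delta$ is "trivial enough", descends (using Proposition \ref{util}, $N'_s \subset N_s^{<\theta>}N$, to control the unipotent part) to a form on the Jacquet module $r_{M_s', M}(\Delta)$ that is $\chi_u$-equivariant under $M_s^{<\theta>}$. By Proposition \ref{Z} applied to each $\Delta_k$, this Jacquet module is nonzero only when each segment $\Delta_k$ breaks up compatibly with the row-partition $(n_{k,1}, \dots, n_{k,t})$, and then $r_{M_s',M}(\Delta) = \bigotimes_{k} (\Delta_{k,1} \otimes \dots \otimes \Delta_{k,t})$ where $[\Delta_{k,1},\dots,\Delta_{k,t}] = \Delta_k$. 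Now $M_s^{<\theta>}$ is the group of block-diagonal matrices with $A_{j,i} = A_{i,j}^\sigma$, so a nonzero equivariant form on $\bigotimes_{i,j}\Delta_{\{i,j\}}$-pieces forces, for $i \neq j$, a nonzero $G_{n_{i,j}}(K)$-equivariant pairing between $\Delta_{i,j}$ and $\Delta_{j,i}^{\sigma}$ twisted appropriately — i.e. $\Delta_{j,i}^\sigma \simeq \Delta_{i,j}^\vee$ — and for $i = j$, that $\Delta_{i,i}$ is $G_{n_{i,i}}(F)$-distinguished (the character $\chi_u$ having been arranged by the modulus computation of Proposition \ref{modulus} to be exactly trivial on the relevant factors, up to the $\delta^{1/2}$ normalization which is precisely cancelled).

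Finally I would do the bookkeeping that turns these constraints on the sub-segments $\Delta_{i,j}$ into the statement about the original $\Delta_k$'s. The diagonal pieces $\Delta_{i,i}$ glue back: if every sub-segment of $\Delta_k$ coming from a diagonal slot is distinguished and the off-diagonal sub-segments pair up in $\sigma$-dual pairs, one argues — using that the segments are unlinked and the classification in Theorem \ref{classgen}, together with the known fact (Theorem 6 of \cite{K}, Corollary 1.6 of \cite{AKT}) that for discrete series "distinguished" and "$\eta_{K/F}$-distinguished" are mutually exclusive alternatives under Galois-autoduality — that after reordering, some initial block of the $\Delta_k$'s pairs off as $\Delta_{i+1}^\sigma = \Delta_i^\vee$ and the remaining $\Delta_k$ are each genuinely distinguished (not merely $\eta_{K/F}$-distinguished). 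The point is that an $\eta_{K/F}$-distinguished-but-not-distinguished $\Delta_k$ in the tail would propagate an $\eta_{K/F}$ into the character $\chi_u$ that cannot be absorbed, contradicting that the form on $\pi$ was genuinely $H$-invariant.

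The main obstacle I expect is the first step: making precise that genuine $H$-distinction of $\pi$ yields a $\chi_u$-distinguished form on $\Delta$ for some single $u$, with the \emph{correct} character $\chi_u$, rather than merely some twist. This requires (a) the exactness/multiplicity-one input to pass from the whole induced representation to one Mackey subquotient without losing invariance, and (b) a careful, honest computation of the normalization characters — the $(\Delta_G/\Delta_H)^{1/2}$ from normalized induction, the modulus of $P \cap uHu^{-1}$, and the $\delta_{P_s'}$ from the Jacquet module — all evaluated on $Z_s^{<\theta>}$ and shown to conspire via Proposition \ref{modulus} so that the residual character on each diagonal $G_{n_{i,i}}(F)$ is trivial. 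Everything downstream is essentially forced by Propositions \ref{Z}, \ref{decomposition}, \ref{util}, \ref{modulus} and the cited results on discrete series; the delicate accounting is localizing the distinction and tracking the characters.
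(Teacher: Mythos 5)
Your plan follows the paper's architecture accurately through the Mackey/Frobenius/Jacquet-module phase: the filtration from Lemma 4 of \cite{F4}, the passage to a $\chi_u$-equivariant form on $\Delta$ under $P\cap uHu^{-1}$, the descent to the Jacquet module via Proposition \ref{util}, and the identification $\chi_u|_{M_s^{<\theta>}} = \delta_{P'_s}^{-1/2}$ via Proposition \ref{modulus}, so that the surviving form is an honest $M_s(F)$-invariant form on $r_{M_s,M}(\Delta)$. You also correctly read off the constraints from Proposition \ref{decomposition}: $\Delta_{i,i}$ distinguished, $\Delta_{j,i}^\sigma = \Delta_{i,j}^\vee$ for $i<j$. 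Up to here you and the paper agree.

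The gap is in what you call the "bookkeeping," and it is not bookkeeping — it is the combinatorial core of the proof, and your sketch of it would not go through. You speak of the diagonal pieces "gluing back" and of an $\eta_{K/F}$ "propagating into $\chi_u$," but neither mechanism is what makes the argument work, and the second is not a real phenomenon here (the residual character is already computed to be trivial; there is no $\eta_{K/F}$ floating around to be absorbed or not). The issue you must actually face is this: a priori the partition $s=(n_{i,j})$ could cut a single segment $\Delta_k$ into several nonzero sub-segments $\Delta_{k,j}$, with one $\Delta_{k,k}$ distinguished and other $\Delta_{k,j}$ paired off against sub-segments of \emph{other} $\Delta_\ell$'s. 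Nothing in the constraints you extracted rules this out directly, and if it happened, the conclusion on the original $\Delta_k$'s would fail. The paper closes this by a separate combinatorial Lemma, proved by induction on $t$, after first imposing a specific normalization of the ordering (segments sorted by length, and within a length class, non-Galois-autodual ones first, alternating with their $\vee\sigma$-partners). The induction then shows, by playing the unlinked hypothesis against this ordering, that each $\Delta_k$ in fact occupies exactly \emph{one} block $(k,j)$ of $s$ — i.e.\ $\Delta_k = \Delta_{k,j_0}$ with all other $\Delta_{k,j}=0$ — essentially because any genuine fragmentation would force two of the unlinked occurring segments (such as $\Delta_k$ and $\Delta_\ell^{\vee\sigma}$ for the $\ell$ sitting opposite it) to be linked, a contradiction. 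Only after that does the $M_s(F)$-constraint translate verbatim into "either $\Delta_k$ is distinguished or $\Delta_k$ and some $\Delta_\ell$ satisfy $\Delta_\ell^\sigma = \Delta_k^\vee$." Without this non-fragmentation step your argument stalls, and you should not expect it to be automatic: it genuinely uses both the unlinked hypothesis and the careful preliminary ordering, which your proposal did not set up.
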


\begin{proof}
Let $\bar{n}=(n_1,\dots,n_t)$ be the partition of $n$ corresponding to $\pi$.
We suppose that the $\Delta_i$'s are ordered by length. Moreover as $\pi$ is Galois autodual, we suppose that inside a subsequence of same length representations,
 the first to occur are the non Galois autodual, and that at the first occurrence of such a $\Delta_{i_0}$,
 its successors are alternatively isomorphic to $\Delta_{i_0}^{\vee\sigma}$ and $\Delta_{i_0}$, until no representation among the $\Delta_i$'s is isomorphic
 to $\Delta_{i_0}$ (hence such a sub subsequence begins with a $\Delta_i$ isomorphic to $\Delta_{i_0}$, and ends with a $\Delta_i$ isomorphic to $\Delta_{i_0}^{\vee\sigma}$).\\

 Let $\Delta$ be the representation 
$\Delta_1\otimes\dots\otimes\Delta_t$ of $P$, from Lemma 4 of \cite{F4}, the $H$-module $\pi$ has a factor series with factors the representations 
$ind_{u^{-1}Pu\cap H}^H((\delta_P^{1/2}\Delta)^u)$ 
(with $(\delta_P^{1/2}\Delta)^u (x)=\delta_P^{1/2}\Delta(uxu^{-1})$) when $u$ describes $R(P\backslash G/H)$. Hence if $\pi$ is distinguished, one of 
these representations admits a nonzero $H$-invariant linear form on its space. 
This implies that there is $u$ in $R(P\backslash G/H)$ such that the representation $ind_{P\cap uHu^{-1}} ^{uHu^{-1}}(\delta_P^{1/2}\Delta)$ admits a nonzero 
$uHu^{-1}$-invariant linear form on its space. Then Frobenius reciprocity law says that $Hom_{uHu^{-1}}(ind_{P\cap uHu^{-1}} ^{uHu^{-1}}(\delta_P^{1/2}\Delta),1)$ is 
isomorphic as a vector space, to $Hom_{P\cap uHu^{-1}}(\delta_P^{1/2}\Delta,\delta_{P\cap uHu^{-1}})= Hom_{P\cap uHu^{-1}}(
 {\delta_P^{1/2}}/{\delta_{P\cap uHu^{-1}}}\Delta, 1)$.\\
 Hence there is on the space $V_{\Delta }$ of $\Delta$ a linear nonzero form $L$, such that for every $p$ in $P\cap uHu^{-1}$ and for every $v$ in $V_{\Delta }$, 
one has $L(\chi(p)\Delta(p)v)=L(v)$, where $\chi(p)=\frac{\delta_P^{1/2}}{\delta_{P\cap uHu^{-1}}}(p)$. As both $\delta_P^{1/2}$ and $\delta_{P\cap uHu^{-1}}$ are trivial 
on $N_s\cap uHu^{-1}$, so is $\chi$. Now, if $s$ is the element of $I(\bar{n})$ corresponding to $u$, let $n'$ belong to $ N'_s$, from Proposition \ref{util}, we can write $n'$ 
as a product $n_s n_0$, with $n_s$ in $N_s\cap uHu^{-1}$, and $n_0$ in $N$. As $N$ is included in $Ker(\Delta)$, one has 
$L(\Delta(n')(v))=L(\Delta(n_s n_0)(v))=L(\Delta(n_s )(v))=L(\chi(n_s)\Delta(n_s )v)= L(v)$. Hence $L$ is actually a nonzero linear form on the Jacquet module of $V_{\Delta}$ 
associated with $ N'_s$. But we also know that $L(\chi(m_s)\Delta(m_s)v)=L(v)$ for $m$ in $M_s(F)$, which reads according to Lemma \ref{modulus}: 
$L(\delta_{P'_s}^{-1/2}(m_s)\Delta(m_s)v)=L(v)$.\\
This says that the linear form $L$ is $M_s(F)$-distinguished on the normalized Jacquet module $r_{ M_s,M}(\Delta)$ (as $M_s$ is also the standard Levi subgroup associated 
with $ N'_s$).\\

The following lemma will conclude the proof of Theorem \ref{final}.

\begin{LM}
Let $\Delta_1,\ \dots,\ \Delta_t$ be unlinked segments of respectively $G_{n_1}(K),\ \dots,\ G_{n_t}(K)$, such that the set 
$\left\lbrace  \Delta_1,\dots,\Delta_t \right\rbrace $, is stable under the involution $\Delta\mapsto\Delta^{\vee\sigma}$, call $n$ the integer $n_1+\dots+n_t$, and 
$\bar{n}$ the sequence $(n_1,\dots,n_t)$. Suppose moreover that the $\Delta_i$'s are ordered by length, and that inside a subsequence of same length representations, the 
first to occur are the non Galois autodual, and that at the first occurrence of such a segment $\Delta_{i_0}$, its successors are alternatively isomorphic to 
$\Delta_{i_0}^{\vee\sigma}$ and $\Delta_{i_0}$, until no segment among the $\Delta_i$'s is isomorphic to $\Delta_{i_0}$. Then if there is $s=(n_{i,j})_{1\leq i \leq j \leq t}$ 
in $I(\bar{n})$, such that $r_{ M_{s},M}(\otimes_i \Delta_i)$ is $M_s(F)$-distinguished, there exists a reordering of the ${\Delta _i}$'s, and an integer $r$ between $1$ and
 $t/2$, such that $\Delta_{i+1}^{\sigma} = \Delta_i^{\vee} $ for $i=1,3,..,2r-1$, and $\Delta_{i}$ is distinguished for $i > 2r$. 
\end{LM}
\begin{proof}[Proof of the Lemma]
We do this by induction on $t$. It is clear for $t=1$.\\
Now suppose the result to be true for any $t'<t$.\\
If there is $s$ in $I(\bar{n})$, such that $r_{M_{s},M}(\Delta)$ is $M_s(F)$-distinguished, in particular $r_{M_{s},M}(\Delta)$ is nonzero, which from Proposition \ref{Z}, 
implies that one can write each $\Delta_i$ under the the form $[\Delta_{i,1},\dots,\Delta_{i,t}]$, with $\Delta_{i,j}$ a subsegment of $\Delta_i$ of length $n_{i,j}$. 
With these notations,  $r_{M_{s},M}(\Delta)$ is equal to the tensor product $\Delta_{1,1}\otimes\dots\otimes\Delta_{t,t}$. Hence the fact that $r_{M_{s},M}(\Delta)$ is 
distinguished by the group $M_s(F)$ is equivalent to the fact that $\Delta_{i,i}$ is $G_{n_{i,i}}(F)$-distinguished if $n_{i,i}\neq 0$, and 
$\Delta_{j,i}=\Delta_{i,j}^{\vee\sigma}$ if $i<j$ and $n_{i,j}\neq 0$.\\
Let $i_0$ be the smallest $i$, such that $\Delta_{1,i}$ (or equivalently $n_{1,i}$) is nonzero.
\begin{enumerate}
 \item $\mathbf{i_0=1}$: the representation $\Delta_{1,1}$ is distinguished, hence Galois autodual. If $\Delta_1= [\Delta_{1,1},\dots,\Delta_{1,t}]$ was not equal to 
$\Delta_{1,1}$, then one would have $\Delta_1^{\vee\sigma}= [\Delta_{1,t}^{\vee\sigma},\dots,\Delta_{1,1}]\neq \Delta_1$. But the segment $\Delta_1^{\vee\sigma}$ would also
 occur, and $\Delta_1$ and $\Delta_1^{\vee\sigma}$ would be linked, which is absurd. Hence $\Delta_1= \Delta_{1,1}$ is distinguished, and $n_{1,i}=0$ if $i>1$.
 We conclude by applying our induction hypothesis to the family $\Delta_2,\dots,\Delta_t$, the integer $n-n_1$ with partition $(n_2,\dots,n_t)$, and sub-partition 
$s'=(n_{i,j}| \ i\geq 2,j\geq 2)$.\\

\item $\mathbf{i_0>1}$: one has $\Delta_{i_0,1}=\Delta_{1,i_0}^{\vee\sigma}$. As the representation $\Delta_{i_0}$ is either Galois autodual, or coupled with 
$\Delta_{i_0}^{\vee\sigma}$, the representation $\Delta_{i_0}^{\vee\sigma}=[\Delta_{i_0,t}^{\vee\sigma},\dots,\Delta_{1,i_0}]$ occurs. But because the representation 
$\Delta_1$ has the smallest length among the $\Delta_i$'s, the segments $\Delta_1$ and  $\Delta_{i_0}^{\vee\sigma}$ would be linked unless $\Delta_1= \Delta_{1,i_0}$, which 
thus must be the case. In particular one has $n_{1,i}=0$ for $i\neq i_0$.\\ 

Two cases occur.
\begin{description}
 \item[a)] $\mathbf{\Delta_1}=\Delta_{1,i_0}$ \textbf{is Galois autodual}: if $\Delta_{i_0}$ wasn't equal to $\Delta_{i_0,1}$, then the two occurring segments 
$\Delta_{i_0}= [\Delta_{i_0,1},\dots,\Delta_{i_0,t}]$ and $\Delta_{i_0}^{\vee\sigma}=[\Delta_{i_0,t}^{\vee\sigma},\dots,\Delta_{i_0,1}]$ would be linked, 
and that is not the case. Hence we have $\Delta_{i_0}=\Delta_{i_0,1}=\Delta_1$, and $n_{i_0,j}=0$ for $j\neq 1$.  We conclude by applying our induction hypothesis 
to the family $\Delta_2,\dots,\Delta_{i_0-1},\Delta_{i_0+1},\dots,\Delta_t$, the integer $n-n_1-n_{i_0}$ with partition $(n_2,\dots,n_{i_0-1},n_{i_0+1 },\dots,n_t)$, 
and sub partition $s'=(n_{i,j}| \ i\neq i_0, \ j \neq 1)$.\\
  
\item[b)] $\mathbf{\Delta_1}=\Delta_{1,i_0}$ \textbf{is not Galois autodual}: in this case $\Delta_2$ is $\Delta_1^{\vee\sigma}$ because of our ordering.\\
Let $j_0$ be the smallest $j$, such that $\Delta_{2,j}$ (or equivalently $n_{2,j}$) is nonzero. If $j_0=2$, as in the case $\mathbf{i_0=1}$, one has $\Delta_2=\Delta_{2,2}$, 
and we conclude by applying our induction hypothesis to the family $\Delta_1,\Delta_3,\dots,\Delta_t$, the integer $n-n_2$ with partition $(n_1,n_3,\dots,n_t)$, and sub 
partition $s'=(n_{i,j}|\  i\neq 2,j\neq 2)$.\\
If $j_0\neq 2$, then $\Delta_2$ must be equal to $\Delta_{2,j_0}$.
It is indeed clear for $j_0>2$, otherwise $\Delta_2$ and $\Delta_{j_0}^{\vee\sigma}$ would be linked, and in the case $j_0=1$,
 then one has $i_0=2$, and $\Delta_2$ is equal to $[\Delta_{2,1},\dots,\Delta_{2,t}]$ but also 
to $\Delta_1^{\vee\sigma}=[\dots,\Delta_{2,1}]$, so that $\Delta_2$ is $\Delta_{2,1}$.\\
This implies $n_{2,j}=0$ for $j\neq j_0$. Thus we  have $\Delta_2=\Delta_{2,j_0}= \Delta_{1,i_0}^{\vee\sigma}=\Delta_1^{\vee\sigma}$. 
But the two occurring segments $\Delta_{i_0}^{\vee\sigma}=[\Delta_{i_0,t}^{\vee\sigma},\dots,\Delta_1]$ and $\Delta_{j_0}=[\Delta_{j_0,2}=\Delta_1,\dots,\Delta_{j_0,t}]$ 
will be linked unless either $\Delta_{i_0}=\Delta_1^{\vee\sigma}$, in which case we conclude by applying our induction hypothesis to the family 
$\Delta_2,\dots,\Delta_{i_0-1},\Delta_{i_0+1},\dots,\Delta_t$, the integer $n-n_1-n_{i_0}$ with partition $(n_2,\dots,n_{i_0-1},n_{i_0+1 },\dots,n_t)$,
 and sub-partition $s'=(n_{i,j}|\ i\neq i_0, \ j \neq 1)$, or $\Delta_{j_0}=\Delta_1$, in which case we conclude by applying our induction hypothesis
 to the family $(\Delta_j|\ j\neq 2 \ and \ j_0)$, the integer $n-n_2-n_{j_0}$ with partition $(n_j|\ j\neq 2 \ and \ j_0)$, and sub partition 
$s'=(n_{i,j}|\ i \neq 2, j \neq j_0)$.
 
\end{description}

\end{enumerate}

 \end{proof}
\end{proof}

\end{document}